\newcommand{\bN}{\mathbb{N}}
\newcommand{\bL}{\mathbb{L}}
\newcommand{\bZ}{\mathbb{Z}}
\newcommand{\mP}{\mathcal{P}}
\newcommand{\mPlk}{\mathcal{P}_{l,k}(C_n(a,b))}
\newcommand{\bSnab}{\bar{S}_n(a,b)}
\newcommand{\Snab}{S_n(a,b)}
\newcommand{\st}{\text{st}}
\newcommand{\primwords}{\mathbb{P}_2(l,k)}
\newcommand{\allwords}{\mathbb{W}_2(l,k)}
\newcommand{\nonpwords}{\mathbb{P}^c_2(l,k)}
\newcommand{\lyndonwords}{\mathbb{L}_2(l,k)}
\newcommand{\lyn}{\text{Lyn}}
\newcommand{\ptwo}{\mathbb{P}^c_2}
\begin{document}


\title{Periodic orbits on 2-regular circulant digraphs
}


\author{Isaac Echols \and Jon Harrison \and Tori Hudgins}


\institute{I. Echols \and J. Harrison (corresponding author) \at
              Department of Mathematics, Baylor University, 1410 S. 4th Street, Waco, 76706, TX, USA \\
              \email{isaac\_echols1@baylor.edu, jon\_harrison@baylor.edu}           
           \and
           T. Hudgins \at Department of Mathematics, University of Kansas, 405 Snow Hall, 1460 Jayhawk Blvd., Lawrence, 66045, KS, USA\\
           \email{thudgins@ku.edu}
}

\date{Received: date / Accepted: date}

\maketitle

\begin{abstract}
Periodic orbits (equivalence classes of closed paths up to cyclic shifts) play an important role in applications of graph theory.  For example, they appear in the definition of the Ihara zeta function and exact trace formulae for the spectra of quantum graphs. Circulant graphs are Cayley graphs of $\mathbb{Z}_n$.  Here we consider directed Cayley graphs with two generators ($2$-regular Cayley digraphs). We determine the number of primitive periodic orbits of a given length (total number of directed edges) in terms of the number of times edges corresponding to each generator appear in the periodic orbit (the step count). Primitive periodic orbits are those periodic orbits that cannot be written as a repetition of a shorter orbit. We describe the lattice structure of lengths and step counts for which periodic orbits exist and characterize the repetition number of a periodic orbit by its winding number (the sum of the step sequence divided by the number of vertices) and the repetition number of its step sequence. To obtain these results, we also evaluate the number of Lyndon words on an alphabet of two letters with a given length and letter count.
\keywords{Graph Theory \and Circulant Graphs \and Lyndon Words \and Periodic Orbits}
\subclass{05C38 \and 68R15 \and 81Q50}
\end{abstract}

\section{Introduction}
\label{intro}


Periodic orbits play an important role in graph theory, where the Ihara zeta function is defined via an Euler product over primitive backtrack-less tail-less cycles, see e.g. \cite{S86,H89,ST96}.  Graphs are also widely employed in mathematical physics, for example to model nanomaterials, waveguides, photonic crystals and quantum chaos, see, e.g. \cite{K04,berkolaiko2013introduction,specgeomgraphs}.  One advantage of quantum graphs is that they are endowed with an exact trace formula, where a spectral quantity is expressed via a sum over periodic orbits (cycles) on the graph.  Such a trace formula was first derived by Roth \cite{R83} for the heat kernel and independently by Kottos and Smilansky for the density of states \cite{KS97,KS99}, with subsequent extensions \cite{BH03,KPS07,K08,BE09}.

In addition to the exact trace formula, there is an exact expansion of the characteristic polynomial of a graph as a finite sum over pseudo orbits (sets of periodic orbits) on the graph.  This was originally described by Kottos and Smilansky \cite{KS99} and refined with an explicit formulation in terms of irreducible pseudo orbits (pseudo orbits where each bond appears at most once) \cite{BHJ12}.  Similarly, Akkermans et al. \cite{ACDMT00} derive the spectral determinant of the graph as a product over periodic orbits.
The pseudo orbit expansion can be employed to evaluate the variance of coefficients of the characteristic polynomial \cite{BAND2019135}.  In particular, for a quantum graph with Fast Fourier Transform scattering matrices at the vertices the variance can be evaluated precisely in terms of the sizes of certain sets of pseudo orbits with self-intersections \cite{HH22b,HH22}.  As a first step to extend such exact combinatorial calculations of spectral statistics, in this article, we find methods to categorize and count the primitive periodic orbits on $2$-regular circulant directed graphs.

Circulant graphs with $n$ vertices are Cayley graphs of the cyclic group $\mathbb{Z}_n$.
The distribution of the diameters of random circulant graphs was investigated by Marklof and Str\"ombergson \cite{MS13} and spectral properties of quantum circulant graphs were described in \cite{HS19}.

Here we consider $2$-regular directed circulant graphs and count the number of primitive periodic orbits (periodic orbits that are not a repetition of a shorter orbit). 
A $2$-regular circulant graph $C_n(a,b)$ on $n$ vertices is defined by a choice of two step sizes $0<a<b<n$.
An edge runs from an origin vertex $i\in \mathbb{Z}_n$ to a terminal vertex $j\in \mathbb{Z}_n$ if $j=i+a$ or $j=i+b$ (where addition is modulo $n$), so edges fall into two class corresponding to the two step sizes $a$ and $b$.   Figure \ref{fig:cgexamples} shows examples of directed circulant graphs. The length $l$ of a path on the graph is the number of edges in the path and we use $k$ to denote the number of times the path uses edges from class $b$ (the $b$-count).  Hence a path is closed if $n|(l-k)a+kb$.  
In this case $la + k(b-a) = \omega n$ for an integer $\omega$ and we call $\omega$ the winding number of the path.  We can now state the main results. 

\begin{figure}[htb]
  \includegraphics[width = \linewidth]{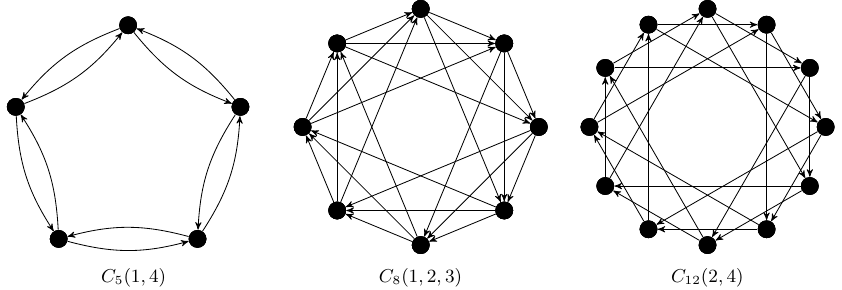}
\caption{Examples of directed circulant graphs}
\label{fig:cgexamples}
\end{figure}

\begin{theorem}\label{thm: old main} 
Let $C_n(a,b)$ be a connected $2$-regular circulant digraph, and suppose $la + k(b-a) = \omega n$. The number of primitive periodic orbits of length $l$ with $b$-count $k$ is, 
    \begin{equation*}
        \frac{n}{l}\sum_{m|\gcd(l,k,\omega)}\mu(m)\binom{l/m}{k/m} \ ,
    \end{equation*}
    where $\mu$ is the M\"obius function. 
\end{theorem}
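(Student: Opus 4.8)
The plan is to encode closed paths explicitly and analyze the cyclic-shift action on them, reducing the count to a word-counting problem that M\"obius inversion settles.

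\textbf{Step 1 (encoding closed paths).} Since $C_n(a,b)$ is $2$-regular, each vertex has a unique outgoing $a$-edge and a unique outgoing $b$-edge, so a closed path of length $l$ is determined by its base vertex $v\in\mathbb{Z}_n$ together with the word $w\in\{a,b\}^l$ recording its sequence of step types. Because $la+k(b-a)=\omega n$, \emph{every} word $w$ with $b$-count $k$ returns to its starting vertex after $l$ steps, so closed paths of length $l$ and $b$-count $k$ are in bijection with $\mathbb{Z}_n\times W$, where $W$ is the set of binary words of length $l$ with $k$ letters $b$, $|W|=\binom{l}{k}$. The cyclic shift $\sigma$ sends $(v,w)$ to $(v',\hat w)$, where $\hat w$ is the left cyclic shift of $w$ and $v'$ is the vertex reached from $v$ by the first step of $w$; the periodic orbits are the orbits of the resulting $\mathbb{Z}_l$-action, and a periodic orbit is primitive precisely when its orbit has full size $l$, i.e. when the stabilizer of one (hence each) of its closed paths is trivial.

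\textbf{Step 2 (orbit sizes --- the main point).} The key claim is that the $\sigma$-orbit of $(v,w)$ has size $l/\gcd(r_0(w),\omega)$, where $r_0(w)$ is the repetition number of $w$ (the largest $r$ with $w$ a perfect $r$-th power). For $j\in\mathbb{Z}_l$, the element $\sigma^j$ fixes $(v,w)$ iff the word part is fixed --- i.e. $w$ has period $\gcd(j,l)$, equivalently $\tfrac{l}{\gcd(j,l)}\mid r_0(w)$ --- and the walk returns to $v$ after $j$ steps; in that case the displacement of those first $j$ steps equals $\tfrac{j}{l}(la+k(b-a))=\tfrac{j}{l}\omega n$, so the return condition reads simply $l\mid j\omega$. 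Testing $j=l/s$ for $s\mid l$ (where $\gcd(l/s,l)=l/s$) shows that $l/s$ lies in the stabilizer iff $s\mid r_0(w)$ and $s\mid\omega$; since the stabilizer is a subgroup of the cyclic group $\mathbb{Z}_l$, these conditions pin it down as $\langle l/\gcd(r_0(w),\omega)\rangle$, a subgroup of order $\gcd(r_0(w),\omega)$, so the $\sigma$-orbit has size $l/\gcd(r_0(w),\omega)$ --- which, notably, does not depend on $v$. Hence $(v,w)$ lies in a primitive periodic orbit iff $\gcd(r_0(w),\omega)=1$. I expect isolating the clean return condition $l\mid j\omega$ and carrying out this stabilizer computation to be the crux; the remaining steps are routine.

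\textbf{Step 3 (counting and M\"obius inversion).} By Step 2 the closed paths lying in primitive periodic orbits are exactly the pairs $(v,w)$ with $\gcd(r_0(w),\omega)=1$; there are $nN$ of them, where $N=\#\{w\in W:\gcd(r_0(w),\omega)=1\}$, and dividing by $l$ (each primitive orbit contains exactly $l$ closed paths) gives $nN/l$ primitive periodic orbits. It remains to compute $N$. Since $w=u^{r_0(w)}$ forces $r_0(w)\mid\gcd(l,k)$, one has $\gcd(r_0(w),\omega)=\gcd(r_0(w),g)$ with $g=\gcd(l,k,\omega)$, and then, using $\sum_{m\mid t}\mu(m)=[t=1]$ together with $\#\{w\in W:m\mid r_0(w)\}=\binom{l/m}{k/m}$ (these $w$ are the $m$-th powers $v^m$ of binary words $v$ of length $l/m$ with $b$-count $k/m$),
\begin{equation*}
N=\sum_{w\in W}\,\sum_{m\mid\gcd(r_0(w),g)}\mu(m)=\sum_{m\mid g}\mu(m)\binom{l/m}{k/m}.
\end{equation*}
Multiplying by $n/l$ gives the claimed formula. (Connectivity of $C_n(a,b)$ is not used in the count itself; it only guarantees that $\omega=(la+k(b-a))/n$ is the genuine winding number on a single graph.)
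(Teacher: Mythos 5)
Your proof is correct, but it takes a genuinely different route from the paper's. The paper first counts Lyndon words of fixed length and $b$-count (its Theorem 3), then builds an explicit bijection $\varphi:\bigcup_{q\in Q}\mathbb{L}_2^q(l,k)\times\mathbb{Z}_{n/q}\to\mathcal{P}_{l,k}$ indexed by the repetition numbers $q$ coprime with $\omega$, obtains a double sum over $q$ and the divisors of $\gcd(l,k)/q$, and finally collapses it to the stated single sum via a separate M\"obius-cancellation proposition. You instead run a direct orbit--stabilizer count on the set $\mathbb{Z}_n\times W$ of based closed paths: the computation that $\sigma^{l/s}$ fixes $(v,w)$ iff $s\mid r_0(w)$ and $s\mid\omega$, hence that the orbit size is $l/\gcd(r_0(w),\omega)$, recovers exactly the paper's key Proposition (primitivity iff $\gcd(r,\omega)=1$, and more generally the repetition number of the orbit is $\gcd(r,\omega)$), but then a single M\"obius inversion over $m\mid\gcd(l,k,\omega)$ finishes the count with no Lyndon words and no double-sum reduction. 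Your route is shorter and more self-contained; the paper's buys the Lyndon-word formula as a standalone result and an explicit parametrization of the orbits that it reuses in its worked examples. Two small points you lean on without proof, both standard and both essentially proved in the paper: that $w$ is a perfect $s$-th power iff $s\mid r_0(w)$ (uniqueness of the primitive root; this is what justifies both the stabilizer identification and $\#\{w:m\mid r_0(w)\}=\binom{l/m}{k/m}$), and that $\sigma^jw=w$ iff $\sigma^{\gcd(j,l)}w=w$. Citing these explicitly would make the argument airtight.
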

Then summing over the allowed $b$-counts we obtain the total number of primitive periodic orbits of length $l$, a formula proposed with numerical evidence in \cite{EHH}.
\begin{theorem}\label{thm: new main}
Let $C_n(a,b)$ be a connected $2$-regular circulant digraph.  Then the number of primitive periodic orbits of length $l$ is 
    $$ \frac{n}{l}\sum_{\omega = \lceil la/n \rceil}^{\lfloor lb/n \rfloor} \sum_{m|\omega}\mu(m)\binom{l/m}{k_\omega/m} \ , $$
    where $k_\omega = (\omega n-la)/(b-a)$, and $\mu$ is the M{\"o}bius function.  We interpret $\binom{x}{y}$ as zero if $x,y\notin\bN_0$.
\end{theorem}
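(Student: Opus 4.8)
The plan is to obtain Theorem~\ref{thm: new main} as a corollary of Theorem~\ref{thm: old main} by summing over all admissible $b$-counts. A closed path of length $l$ has some $b$-count $k\in\{0,1,\dots,l\}$, and closure forces $n\mid(l-k)a+kb$, equivalently $la+k(b-a)=\omega n$ with $\omega\in\bZ$ the winding number. Since $b>a$, the map $k\mapsto\omega=(la+k(b-a))/n$ is strictly increasing, and for $0\le k\le l$ it takes values in $[la/n,\,lb/n]$; hence every admissible winding number lies in $\{\lceil la/n\rceil,\dots,\lfloor lb/n\rfloor\}$, and conversely $k$ is recovered as $k_\omega=(\omega n-la)/(b-a)$, with the bounds on $\omega$ forcing $0\le k_\omega\le l$ whenever $k_\omega$ is an integer. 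So summing the count of Theorem~\ref{thm: old main} over $\omega=\lceil la/n\rceil,\dots,\lfloor lb/n\rfloor$ enumerates every primitive periodic orbit of length $l$ exactly once, \emph{provided} the terms attached to those $\omega$ with $(b-a)\nmid(\omega n-la)$ contribute nothing.

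That proviso is discharged by the stated convention that $\binom{x}{y}$ vanishes unless both entries are non-negative integers. If $k_\omega=(\omega n-la)/(b-a)\notin\bN_0$, then $k_\omega/m\notin\bN_0$ for every $m\mid\omega$, so each term $\binom{l/m}{k_\omega/m}$ is zero and the inner sum vanishes, matching the absence of orbits with that winding number. For the admissible $\omega$, i.e.\ those with $k_\omega\in\{0,\dots,l\}$, I would then widen the summation range $m\mid\gcd(l,k_\omega,\omega)$ appearing in Theorem~\ref{thm: old main} to $m\mid\omega$: a divisor $m$ of $\omega$ that does not divide $\gcd(l,k_\omega,\omega)$ must fail to divide $l$ or fail to divide $k_\omega$, and in either case the upper or lower entry of $\binom{l/m}{k_\omega/m}$ is a non-integer, so the term is zero. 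Hence
\[
\sum_{m\mid\gcd(l,k_\omega,\omega)}\mu(m)\binom{l/m}{k_\omega/m}=\sum_{m\mid\omega}\mu(m)\binom{l/m}{k_\omega/m}\,,
\]
and substituting this into the sum over $\omega$ while factoring out $n/l$ produces precisely the asserted formula.

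Essentially all the content sits in Theorem~\ref{thm: old main}; the remaining work is the bookkeeping of the winding-number range together with a uniform treatment of the degenerate binomial coefficients, so I do not expect a serious obstacle. The one step deserving care is the passage from $m\mid\gcd(l,k_\omega,\omega)$ to $m\mid\omega$: one cannot claim the extra divisors still divide $l$ and $k_\omega$, since $m\mid l$ and $m\mid\omega$ only give $m\mid k_\omega(b-a)$, not $m\mid k_\omega$. It is therefore the convention on $\binom{l/m}{k_\omega/m}$ with a non-integer entry, rather than any gcd identity, that makes those terms drop out, and the write-up should make that dependence explicit.
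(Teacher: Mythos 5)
Your proposal is correct and follows essentially the same route as the paper: sum Theorem~\ref{thm: old main} over the winding numbers $\lceil la/n\rceil\le\omega\le\lfloor lb/n\rfloor$, and enlarge the divisor sum from $m\mid\gcd(l,k_\omega,\omega)$ to $m\mid\omega$ using the convention that $\binom{l/m}{k_\omega/m}$ vanishes when an entry is not a non-negative integer. Your closing remark correctly identifies the one point needing care (the extra divisors of $\omega$ are killed by the binomial convention, not by a gcd identity), which is exactly the observation the paper uses in its final sentence.
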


While theorems \ref{thm: new main} and \ref{thm: old main} count the total number of primitive periodic orbits of a given length, and the number with a given length and $b$-count respectively, they do not immediately simplify the trace formula, or formulas for the spectral statistics.   These formulas are not typically entirely combinatorial.  For example, they contain phases that are dependent on the metric length of the periodic orbits.  If one considers an equilateral graph where all the edges have the same metric length this phase would depend only on the number of edges in the orbit.  However, the trace formula still contains a stability amplitude for each orbit, the product of elements of the vertex scattering matrices around the orbit.  The stability amplitude can vary between periodic orbits, even for orbits with the same length and $b$-count.  Despite this, in certain situations, it is possible to reduce spectral quantities to a purely combinatorial formula; for example, the variance of the coefficients of the characteristic polynomial of a $2$-regular directed quantum graph with fast Fourier transform vertex scattering matrices \cite{HH22}.  Consequently, we regard counting primitive periodic orbits as a first step toward simplifying the trace formula or evaluating spectral statistics of quantum circulant graphs.

To count primitive periodic orbits we exploit a connection with Lyndon words on an alphabet of two letters.  Lyndon words are words that are earlier (in lexicographic order) than all of their rotations.   In the process of counting primitive orbits we obtain the following formula for the number of Lyndon words on an alphabet of two letters with a given letter count. 
\begin{theorem}\label{thm: Lyndon}
Let $\mathbb{L}_2(l,k)$ be the set of Lyndon words of length $l$ on the alphabet $\{a,b\}$  where $b$ appears $k$ times.  Then,
\begin{equation}
    |\lyndonwords| = \frac{1}{l}\sum_{m|\gcd(l,k)}\mu(m)\binom{l/m}{k/m} \ .
\end{equation}
\end{theorem}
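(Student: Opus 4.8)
The plan is to count the words of length $l$ with $b$-count $k$ according to their primitive roots, and then convert primitive words into Lyndon words using the classical fact that each primitive rotation class contains a unique Lyndon representative, checking throughout that everything is compatible with fixing the number of $b$'s. Write $g=\gcd(l,k)$ and, for $d,j\in\bN_0$ with $j\le d$, let $\mathbb{W}_2(d,j)$, $\mathbb{P}_2(d,j)$, $\mathbb{L}_2(d,j)$ be the sets of words, primitive words, and Lyndon words of length $d$ on $\{a,b\}$ with $b$-count $j$, so that $|\mathbb{W}_2(d,j)|=\binom{d}{j}$. First I would recall two standard facts. One: every word $w$ of length $d$ is uniquely a power $u^{d/e}$ of a primitive word $u$ of length $e\mid d$, and if $w$ has $b$-count $j$ then $u$ has $b$-count $je/d$, so necessarily $(d/e)\mid j$. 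Two: the $e$ cyclic rotations of a primitive word of length $e$ are pairwise distinct, and exactly one of them is a Lyndon word --- namely the lexicographically least rotation, which is then strictly smaller than all its proper rotations precisely because they are distinct from it and none is smaller --- and conversely every Lyndon word of length $e$ arises in this way from exactly one rotation class. Since cyclic rotation preserves the $b$-count, the second fact gives $|\mathbb{L}_2(e,j)|=|\mathbb{P}_2(e,j)|/e$ for all admissible $e,j$; in particular $|\lyndonwords|=|\mathbb{P}_2(l,k)|/l$.

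It then remains to count primitive words. For each $m'\mid g$, applying the first fact to $\mathbb{W}_2(l/m',k/m')$ shows that its elements are exactly the powers $u^{m/m'}$ with $u\in\mathbb{P}_2(l/m,k/m)$, where $m$ runs over the multiples of $m'$ dividing $g$; this correspondence is a bijection onto each piece and the pieces are disjoint, so
\[ \binom{l/m'}{k/m'}=\sum_{m'\mid m\mid g}\bigl|\mathbb{P}_2(l/m,k/m)\bigr| \qquad\text{for every }m'\mid g. \]
These identities are precisely the hypothesis of M\"obius inversion over the divisor lattice of $g$, which yields $|\mathbb{P}_2(l/m',k/m')|=\sum_{m'\mid m\mid g}\mu(m/m')\binom{l/m}{k/m}$; taking $m'=1$ gives $|\mathbb{P}_2(l,k)|=\sum_{m\mid g}\mu(m)\binom{l/m}{k/m}$. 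Dividing by $l$ and using $|\lyndonwords|=|\mathbb{P}_2(l,k)|/l$ completes the proof.

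The routine ingredients are the primitive-root decomposition and the M\"obius inversion; the step deserving a careful (though short) argument is the claim that each primitive rotation class of length $e$ contains exactly one Lyndon word and that this sets up a bijection between Lyndon words of a given $b$-count and primitive rotation classes of that $b$-count. I would also sanity-check the boundary cases against the formula: for $k=0$ the right-hand side is $\frac{1}{l}\sum_{m\mid l}\mu(m)\binom{l/m}{0}=\frac{1}{l}\sum_{m\mid l}\mu(m)$, which is $1$ when $l=1$ and $0$ otherwise, consistent with $a$ being the only Lyndon word with no $b$, and the case $k=l$ is symmetric. I do not anticipate a real obstacle: the substance of the theorem is the observation that the classical necklace/Lyndon correspondence respects the content of the word, and once that is in place the formula falls out of M\"obius inversion over $\gcd(l,k)$.
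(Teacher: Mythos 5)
Your proof is correct, and the first half is exactly the paper's Lemma~\ref{LyndonBasic}: the bijection $w\mapsto[w]$ between Lyndon words and rotation classes of primitive words, which preserves the $b$-count, gives $|\lyndonwords|=|\primwords|/l$. Where you diverge is in how the M\"obius sum for $|\primwords|$ is produced. You use the unique-primitive-root decomposition to get the divisor-sum identity $\binom{l/m'}{k/m'}=\sum_{m'\mid m\mid \gcd(l,k)}|\mathbb{P}_2(l/m,k/m)|$ and then apply M\"obius inversion on the divisor lattice; the paper instead counts the \emph{nonprimitive} words $\nonpwords$ directly by inclusion--exclusion over the sets $\mathbb{P}^c_2(l,k,p)$ of $p$-th powers, which requires two structural propositions (that $\mathbb{P}^c_2(l,k,p_1)\subseteq\mathbb{P}^c_2(l,k,p_2)$ iff $p_2\mid p_1$, and that for coprime $p_1,p_2$ the intersection is exactly $\mathbb{P}^c_2(l,k,p_1p_2)$), and then reads off the signed sum over square-free divisors. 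The two derivations are of course two faces of the same M\"obius computation, but yours is shorter and leans on a cleaner, more standard lemma (uniqueness of the primitive root), whereas the paper's explicit lattice of power-sets is reused later: Proposition~\ref{winding vs repetition} recycles the truncation argument from the containment proposition, and the inclusion--exclusion picture reappears in the cancellation analysis of Proposition~\ref{sum reduction thm} and Figure~\ref{fig:binom grid}. One small thing worth making explicit if you write this up fully: in the divisor-sum identity you should note that $\mathbb{P}_2(l/m,k/m)$ is empty (equivalently, the term is absent) unless $m$ divides both $l$ and $k$, which is why the sum runs only over divisors of $\gcd(l,k)$ rather than of $l$; you state this but it is the one place a reader might want a line of justification.
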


The article is organized as follows.  In section \ref{sec: circulant} we introduce the notation for circulant graphs and periodic orbits.  In section \ref{sec: lattice} we describe the lattice structure of the allowed values of periodic orbit lengths and $b$-counts corresponding to primitive periodic orbits.  In section \ref{sec: Lyndon} we obtain theorem \ref{thm: Lyndon} for the number of Lyndon words of length $l$ and $b$-count $k$.  In section \ref{sec: counting} we use theorem \ref{thm: Lyndon} to prove theorems \ref{thm: old main} and \ref{thm: new main} counting the number of primitive periodic orbits on $2$-regular circulant digraphs.  Finally, in section \ref{sec: conclusions}, we discuss the results and illustrate them with some examples.

\section{Circulant Graphs}\label{sec: circulant}

A \emph{directed graph} (digraph) is a set $\mathcal{V}$ of vertices, together with a set $\mathcal{B}$ of ordered pairs of vertices, called bonds (directed edges). For a bond $e=(v_1,v_2)$, the vertex $v_1$ is called the \textit{origin} of $e$, denoted $o(e)$, and the vertex $v_2$ is called the \textit{terminus} of $e$, denoted $t(e)$. In this paper, we assume graphs are simple; that is, $\mathcal{V}$ is a finite set and $\mathcal{B}$ has no repeated bonds or bonds of the form $(v,v)$ (loops). Note that we do allow pairs of bonds which are the reverse of each other, $(v,u)$ and $(u,v)$. 

Given integers $0<a_1<a_2<\dots<a_m < n$, the \textit{directed circulant graph} on $n$ vertices with step sizes $a_1,\dots,a_m$, denoted $C_n(a_1,\dots,a_m)$, is the directed graph with vertex set $\mathcal{V} = \bZ_n$  and bond set $\mathcal{B}=\{(v,v+a_i)|v\in \mathcal{V}, 1\leq i\leq m \}$, where addition in $\bZ_n$ is modulo $n$. Note that undirected circulant graphs require $a_m\leq n/2$ to prevent repeated edges.
Figure \ref{fig:cgexamples} shows the circulant digraphs $C_5(1,4)$, $C_8(1,2,3)$ and $C_{12}(2,4)$. For a bond $e = (v,v+a_i)$ where $0<a_i<n$, the \textit{step size} of $e$, denoted $|e|$, is the number $a_i$.
Note that $C_n(a_1,\dots,a_m)$ is \textit{$m$-regular}; that is, each vertex has $m$ incoming bonds and $m$ outgoing bonds.

On a directed graph, a \textit{path} from a vertex $v$ to a vertex $u$ is a sequence of bonds $c = (e_1,e_2,\dots,e_l)$ for which $o(e_1)=v$, $t(e_l)=u$, and $t(e_j) = o(e_{j+1})$ for each $j$. A path is called a \textit{circuit} if additionally $v = u$. The number of (possibly repeated) bonds $l$ is called the \textit{length} of the path. For a circulant graph, the \textit{transit distance}, denoted $\Delta(c)$, is $|e_1|+\dots+|e_l|$. Note that $u \equiv v + \Delta(c) \, (\text{mod } n)$. As such, $c$ is a circuit if and only if $n|\Delta(c)$, in which case we define the \textit{winding number} of $c$ to be $\Delta(c)/n$, denoted 
$\omega$. 
The \textit{step sequence} of a path $c$ is $\st(c)=(|e_1|,\dots,|e_l|)$. 

A directed graph is \textit{strongly connected} if, for any two vertices $v$ and $u$, there is a path from $v$ to $u$. In figure \ref{fig:cgexamples}, $C_{12}(2,4)$ is disconnected; it has two strongly connected components, namely $\{0,2,4,6,8,10\}$ and $\{1,3,5,7,9,11\}$.  In this paper, we consider circulant graphs that are strongly connected, or equivalently $\gcd(n,a_1,\dots,a_m) =1$, see \cite{D86}.

Given a circuit $c = (e_1,e_2,\dots,e_l)$, the \textit{periodic orbit} containing $c$, denoted $p = [c]$, is the equivalence class of $c$, up to rotation. That is, $p = \{\sigma^x(c): x = 0,\dots,l-1\}$, where $\sigma(e_1,e_2,\dots,e_l) = (e_2,e_3,\dots,e_l,e_1)$. Note that length, transit distance, and winding number of circuits in $p$ is invariant, so they are well-defined for $p$.
A periodic orbit $p$ is called \textit{primitive} if it is not a repetition of a shorter periodic orbit, that is, if there is no circuit $c_0$ and $r>1$ such that $p = [c_0^r]$. For nonprimitive $p$, there is a unique primitive periodic orbit $q$, called the \textit{root} of $p$, such that $p = q^r$, in which case $r$ is called the \textit{repetition number} of $p$. We write $\mathcal{P}_l(G)$ for the set of primitive periodic orbits of length $l$ in a directed graph $G$. 

In this article, we consider $2$-regular circulant graphs $C_n(a,b)$. We write $\mPlk$ for the set of primitive periodic orbits of length $l$ for which the number of bonds of step size $b$ is $k$; we refer to $k$ as the $b$-\textit{count} of an orbit or circuit. Note that these sets are disjoint for different values of $k$, so 
\begin{equation}\label{eq: sum over k basic}
    |\mP_l(C_n(a,b))| = \sum_{k=0}^l |\mPlk| \ .
\end{equation}
This sum can be simplified by classifying the values of $k$ for which the set $\mPlk$ is nonempty, which is the goal of the next section.

\section{The Lattice of Periodic Orbits}\label{sec: lattice}

In this section we classify periodic orbits in $C_n(a,b)$ by length $l$ and $b$-count $k$, and describe the lattice structure of all corresponding $(l,k)$ in $\bZ^2$.
If the set $\mPlk$ is nonempty then 
\begin{equation}\label{eq: la+kd}
    \Delta(p) = \omega_p n = (l-k)a+kb = la + kd \ ,
\end{equation} where $d = b-a$. Let $\bSnab = \{(l,k)\in \bZ^2:la+kd = \omega n \text{ for some } \omega \in \bZ\}$. This includes negative $l$ and $k$, so let $\Snab$ be the subset of $\bSnab$ that correspond to actual orbits, i.e., for which $0\leq k\leq l$ and $l>0$.

Rather than search for orbits directly in terms of length and $b$-count, it is more efficient to search by the winding number, which is a multiple of $g = \gcd(a,d) = \gcd(a,b)$.

\begin{proposition}
    \label{gdividesomega}
        Let $g = \gcd(a,d)$ and suppose $\gcd(a,d,n) = 1$. Then the equation $la + kd = \omega n$ has an integer solution $l,k$ if and only if $g|\omega$.
\end{proposition}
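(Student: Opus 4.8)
The plan is to reduce the proposition to the elementary fact that the set of integers of the form $la+kd$ with $l,k\in\bZ$ is exactly the ideal $g\bZ$, together with the observation that $g$ is coprime to $n$. Once those two facts are in place, the equation $la+kd=\omega n$ is solvable precisely when $\omega n\in g\bZ$, i.e. when $g\mid\omega n$, which by coprimality of $g$ and $n$ is the same as $g\mid\omega$.

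For the forward direction I would argue as follows. Suppose there exist integers $l,k,\omega$ with $la+kd=\omega n$. Since $g=\gcd(a,d)$ divides both $a$ and $d$, it divides the left-hand side, hence $g\mid\omega n$. Next I would record the key small observation that $\gcd(g,n)=1$: any common divisor of $g$ and $n$ divides $a$, $d$ and $n$, hence divides $\gcd(a,d,n)=1$; equivalently $\gcd(g,n)=\gcd(\gcd(a,d),n)=\gcd(a,d,n)=1$. From $g\mid\omega n$ and $\gcd(g,n)=1$ one concludes $g\mid\omega$.

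For the converse, assume $g\mid\omega$ and write $\omega=g\omega'$. By B\'ezout's identity choose integers $l_0,k_0$ with $l_0a+k_0d=g$, and set $l=\omega' n\,l_0$, $k=\omega' n\,k_0$. Then
\[
la+kd=\omega' n\,(l_0a+k_0d)=\omega' n\,g=\omega n,
\]
exhibiting an integer solution. This closes the second direction and completes the proof.

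I do not expect a genuine obstacle here: the argument is short and uses only standard divisibility facts. The one place that warrants a line of care is the step $\gcd(g,n)=1$, since it is exactly what licenses cancelling $n$ from the divisibility $g\mid\omega n$; the hypothesis $\gcd(a,d,n)=1$ is used precisely there and nowhere else.
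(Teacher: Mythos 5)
Your proof is correct and follows essentially the same route as the paper: both reduce the problem to the fact that $\{la+kd : l,k\in\bZ\} = g\bZ$ (B\'ezout) and then cancel $n$ from $g\mid\omega n$ using $\gcd(g,n)=\gcd(a,d,n)=1$. The only difference is cosmetic: you correctly attribute the solvability criterion to B\'ezout's identity, whereas the paper labels it the Chinese remainder theorem, and you spell out both directions that the paper compresses into one line.
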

\begin{proof}
        By the Chinese remainder theorem, an integer solution $l,k$ exists if and only if $g|\omega n$. But $\gcd(g,n) = \gcd(a,d,n) = 1$. Then $g|\omega n$ if and only if $g|\omega$, as $g$ and $n$ are coprime. \qed
\end{proof}

Now that we know the set of possible winding numbers, we will find points in $\bSnab$ with a fixed winding number.

 \begin{proposition}
        Suppose $la+kd = t$ has an integer solution $l_0, k_0$. Then $l_1, k_1$ is another solution if and only if there exists $x \in \bZ$ such that $l_1 = l_0+xd'$ and $k_1 = k_0-xa'$, where $a' = a/g, d' = d/g$.
    \end{proposition}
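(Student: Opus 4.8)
The plan is to treat this as the standard classification of solutions to an inhomogeneous linear Diophantine equation, pivoting on the fact that $a' = a/g$ and $d' = d/g$ are coprime. First I would dispatch the easy ``if'' direction: given $l_1 = l_0 + xd'$ and $k_1 = k_0 - xa'$, substitute into $l_1 a + k_1 d$ and collect terms to obtain $l_0 a + k_0 d + x(d'a - a'd)$. Since $d'a - a'd = (da - ad)/g = 0$, this equals $t$, so $(l_1,k_1)$ is indeed a solution.

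For the ``only if'' direction, suppose $l_1 a + k_1 d = t = l_0 a + k_0 d$. Subtracting gives $(l_1 - l_0)a + (k_1 - k_0)d = 0$, and dividing through by $g = \gcd(a,d)$ yields $(l_1 - l_0)a' = -(k_1 - k_0)d'$. The key step is that $\gcd(a',d') = 1$, which is immediate from the definition of $g$; hence $d'$ divides the left-hand side and, being coprime to $a'$, must divide $l_1 - l_0$. Writing $l_1 - l_0 = xd'$ for some $x \in \bZ$, substituting back, and cancelling the common nonzero factor $d'$ (nonzero because $0 < a < b$ forces $d = b - a > 0$) gives $k_1 - k_0 = -xa'$, which is exactly the claimed form.

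I do not anticipate a genuine obstacle here; the only points requiring care are recording why $\gcd(a',d') = 1$ and why the cancellation of $d'$ is legitimate. One could also phrase the argument more structurally: the solution set of the homogeneous equation $la + kd = 0$ is the rank-one sublattice of $\bZ^2$ generated by $(d',-a')$, and the full solution set is the coset $(l_0,k_0) + \bZ(d',-a')$ of it; but the elementary divisibility argument above is self-contained and fits the level of the surrounding discussion of $\bSnab$ and $\Snab$.
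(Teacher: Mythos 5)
Your proof is correct and follows essentially the same route as the paper's: the forward direction via the identity $d'a - a'd = 0$, and the converse via dividing the homogeneous equation by $g$ and using $\gcd(a',d')=1$ to conclude $d' \mid (l_1 - l_0)$. The only difference is cosmetic — the paper defines $x = (l_1-l_0)/d' = (k_0-k_1)/a'$ up front and then verifies integrality, whereas you derive $x$ from the divisibility argument; both are the standard linear Diophantine classification.
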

    \begin{proof}
        Suppose $l_0a+k_0d = l_1a+k_1d$. Dividing by $g$, we have $l_0a' + k_0d' = l_1a'+k_1d'$. Then $x=(l_1-l_0)/d'=(k_0-k_1)/a'$. 
        To show $x\in \bZ$, note that $(l_1-l_0)a'/d' = k_0-k_1 \in \bZ$. Thus $d'|(l_1-l_0)a'$. Since $\gcd(a',d') = 1$, we have $d'|(l_1-l_0)$.

        Conversely, suppose $l_1 = l_0+xd'$, and $k_1 = k_0 - xa'$, with $x\in \bZ$. Then $l_1,k_1\in \bZ.$ Note that $d'a = a'd$. Thus $l_1a+k_1d = l_0a + x(d'a -a'd) +k_0d = t$. \qed
    \end{proof}

Combining the two previous results we see the set of solutions of $la+kd = \omega n$ form a lattice. Figure \ref{fig:S_7(1,3)} shows the lattice $\bar{S}_7(1,3)$.
\begin{figure}[htb]
    \centering
    \includegraphics[width=\linewidth]{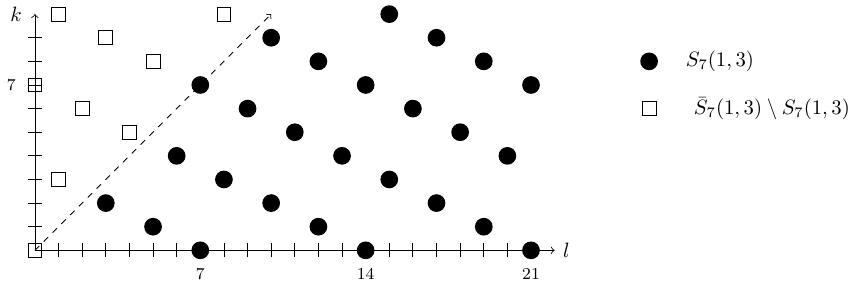}
    \caption{The lattice $\bar{S}_7(1,3)$.}
    \label{fig:S_7(1,3)}
\end{figure}

    \begin{corollary}
        The set $\bar{S}_n(a,b)$ is a lattice in $\bZ^2$ containing $(0,0)$. In particular, there are integers $l_0,k_0$ such that $l_0a+k_0d = gn$, and $\bar{S}_n(a,b) =\{x(d',-a')+y(l_0,k_0):x,y\in\bZ\}$. The matrix 
        \begin{equation}\label{eq:matrix m}
            \mathbf{M} = \begin{bmatrix}
           d' & l_0 \\
           -a' & k_0
         \end{bmatrix}^{-1}
     = \frac{1}{
     n}\begin{bmatrix}
           k_0 & -l_0 \\
           a' & d'
         \end{bmatrix}
        \end{equation}
        is a bijection  from $\bar{S}_n(a,b)$ to $\bZ^2$.
    \end{corollary}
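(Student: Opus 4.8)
The plan is to assemble $\bar{S}_n(a,b)$ explicitly from the two preceding propositions and then read off the matrix statement. First I would produce the generator $(l_0,k_0)$. Since the strong connectivity hypothesis gives $\gcd(a,b,n)=1$ and hence $\gcd(a,d,n)=\gcd(a,b,n)=1$, Proposition \ref{gdividesomega} applies; taking $\omega=g$ (and $g\mid g$ trivially) shows that $la+kd=gn$ has an integer solution, which I fix and call $(l_0,k_0)$. Dividing that relation by $g$ yields the numerical identity $a'l_0+d'k_0=n$, which will later pin down the determinant. Also $(0,0)\in\bar{S}_n(a,b)$ since $0\cdot a+0\cdot d=0\cdot n$.

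Next I would prove $\bar{S}_n(a,b)=\{x(d',-a')+y(l_0,k_0):x,y\in\bZ\}$. For $\supseteq$: the vector $x(d',-a')$ contributes $x(d'a-a'd)=0$ to $la+kd$ because $d'a=a'd$, while $y(l_0,k_0)$ contributes $y\,gn$, so any such combination lies in $\bar{S}_n(a,b)$ with winding number $yg\in\bZ$. For $\subseteq$: given $(l,k)\in\bar{S}_n(a,b)$ with $la+kd=\omega n$, Proposition \ref{gdividesomega} gives $g\mid\omega$, say $\omega=yg$; then $(l-yl_0)a+(k-yk_0)d=\omega n-ygn=0$, so by the second proposition (applied with $t=0$ and the trivial solution $(0,0)$) there is $x\in\bZ$ with $(l-yl_0,k-yk_0)=x(d',-a')$, i.e.\ $(l,k)=x(d',-a')+y(l_0,k_0)$. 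This simultaneously exhibits $\bar{S}_n(a,b)$ as the lattice generated by those two vectors.

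For the bijection claim, observe that $(x,y)\mapsto x(d',-a')+y(l_0,k_0)$ is multiplication by $\begin{bmatrix}d'&l_0\\-a'&k_0\end{bmatrix}$, whose determinant is $d'k_0+a'l_0=n\neq 0$; hence this map is an injection $\bZ^2\to\bZ^2$ with image exactly $\bar{S}_n(a,b)$ by the previous paragraph, so it is a bijection $\bZ^2\to\bar{S}_n(a,b)$. Inverting the $2\times2$ matrix gives exactly $\mathbf{M}=\tfrac1n\begin{bmatrix}k_0&-l_0\\a'&d'\end{bmatrix}$, and for any $(l,k)\in\bar{S}_n(a,b)$ the computation above identifies $\mathbf{M}(l,k)^{\mathsf T}=(x,y)^{\mathsf T}\in\bZ^2$, so $\mathbf{M}$ restricted to $\bar{S}_n(a,b)$ is the inverse of that bijection, proving $\mathbf{M}$ is a bijection $\bar{S}_n(a,b)\to\bZ^2$.

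There is no serious obstacle here; the whole argument is an application of the two propositions plus elementary linear algebra. The only point needing care is the bookkeeping: using $a'l_0+d'k_0=n$ to see the generating matrix has determinant exactly $n$ (so that its inverse is precisely the stated $\mathbf{M}$), and combining injectivity of the generating map with the explicit image description to upgrade it to a bijection rather than merely an embedding.
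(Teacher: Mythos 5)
Your proof is correct and takes essentially the same approach the paper intends: the paper states this corollary without a separate proof, saying only that it follows by ``combining the two previous results,'' and your argument is exactly that combination, carried out in full (constructing $(l_0,k_0)$ from Proposition~\ref{gdividesomega} with $\omega=g$, using the second proposition with $t=0$ for the lattice description, and using $a'l_0+d'k_0=n$ to get the determinant and hence the explicit inverse $\mathbf{M}$). Nothing is missing.
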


Note that the choice of $l_0,k_0$ is not unique, in fact it need not satisfy the constraints for actual orbits. Applying $\mathbf{M}$ to $(l,k)\in\bSnab$, the $y$-coordinate represents an increment in the winding number.  Precisely, $y = (la'+kd')/n=\omega/g$ is the number of repetitions of a step sequence of length $l_0$ with $b$-count $k_0$ (followed by possible rearrangement), while the $x$-coordinate, $x = (lk_0-l_0k)/n$, represents how many times the length and $b$-count have been shifted away from $yl_0$ and $yk_0$ respectively, while preserving the winding number.

While a periodic orbit corresponds to one point $(l,k)$, there are $\binom{l}{k}$ orderings of the corresponding step sequence and the majority of these orderings correspond to distinct primitive periodic orbits with respect to a given starting vertex. 
To take this into account when counting periodic orbits we will employ a connection to Lyndon words.

\section{Lyndon Words of Fixed Letter Count}\label{sec: Lyndon}

We can view the step sequences of circuits on $C_n(a,b)$ as words on the alphabet $\{a,b\}$, with lexicographic order $a \prec b$. In this section we evaluate the number of Lyndon words with a fixed $b$-count, which will be applied in the formula for $|\mPlk|$ in the next section. 

As with cycles, for a word $w = w_1w_2\dots w_n$ we use $[w]$ for the equivalence class of $w$ under rotation, that is, $[w] = \{\sigma^s(w):s = 0,\dots,l-1\}$, where $l$ is the length of $w$ and $\sigma(w_1w_2\dots w_l) = w_2w_3\dots w_lw_1$. Note that $w$ is primitive if and only if $|[w]| = l$.
A word $w$ is a \textit{Lyndon word} if it precedes all of its rotations in lexicographic order, i.e., $w\prec \sigma^s(w)$ for $s = 1,\dots,l-1$ \cite{lyndon1954burnside}. Note that all Lyndon words are primitive. Given a word $w$, if $[w]$ contains a Lyndon word, we call that word the \textit{Lyndon rotation} of $w$, denoted $\lyn(w)$.

We will write $\allwords$ for the set of words on $\{a,b\}$ of length $l$ with $b$-count $k$, and write $\primwords$ and $\nonpwords$, and $\lyndonwords$ for the subsets of primitive, nonprimitive, and Lyndon words, contained in $\allwords$ respectively.

   \begin{lemma}\label{LyndonBasic}The order of the set of Lyndon words with fixed $b$-count is,
   \begin{equation}
       |\lyndonwords| = \frac{1}{l}\left[\binom{l}{k} - |\nonpwords|\right] \ .
   \end{equation}
    \end{lemma}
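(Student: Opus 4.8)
The plan is to count the primitive words in $\allwords$ in two ways: once via inclusion-exclusion against the nonprimitive words, and once by grouping them into rotation classes, each of which contributes exactly one Lyndon word. First I would record that $|\allwords| = \binom{l}{k}$, since a word in $\allwords$ is determined by the choice of which $k$ of its $l$ positions carry the letter $b$. Next, observe that the rotation map $\sigma$ preserves both length and $b$-count, so $\allwords$ is a disjoint union of rotation orbits $[w]$; moreover primitivity is a rotation-invariant property (if $w = c_0^r$ then so is every $\sigma^s(w)$, up to a rotation of $c_0$), so $\primwords$ and $\nonpwords$ are each unions of such orbits. In particular $|\primwords| = \binom{l}{k} - |\nonpwords|$.

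Now I would use the characterization, already noted in the excerpt, that $w$ is primitive if and only if $|[w]| = l$: every orbit making up $\primwords$ has exactly $l$ elements. The key remaining input is the classical fact about Lyndon words: each rotation orbit of a primitive word contains exactly one Lyndon word. Existence follows by taking the lexicographically least rotation $w^\ast$ of $w$ and checking $w^\ast \prec \sigma^s(w^\ast)$ for all $s = 1,\dots,l-1$ — this uses primitivity, so that all $l$ rotations are genuinely distinct and the minimum is strict. Uniqueness is immediate from the same observation: a Lyndon word is by definition the strict minimum of its orbit, and a set of distinct elements has only one minimum. Hence $[w] \mapsto \lyn(w)$ is a bijection between the set of rotation orbits comprising $\primwords$ and the set $\lyndonwords$.

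Putting these together, $\primwords$ is partitioned into $|\lyndonwords|$ orbits, each of size $l$, so $|\primwords| = l\,|\lyndonwords|$, and combining with the count from the first paragraph gives
\begin{equation*}
    |\lyndonwords| = \frac{1}{l}\,|\primwords| = \frac{1}{l}\left[\binom{l}{k} - |\nonpwords|\right] \ .
\end{equation*}
The main obstacle is the Lyndon-rotation existence-and-uniqueness statement; it is standard (going back to Lyndon), so I would either cite it or dispatch it quickly with the lexicographic-minimum argument above, taking care that primitivity is what guarantees the $l$ rotations are all distinct so the minimum is strictly attained.
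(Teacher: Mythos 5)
Your argument is correct and follows essentially the same route as the paper: the bijection $w \mapsto [w]$ between $\lyndonwords$ and the rotation classes comprising $\primwords$, each class having exactly $l$ elements by primitivity, together with $|\allwords| = \binom{l}{k}$. The only difference is that you spell out the existence-and-uniqueness of the Lyndon rotation in each primitive class, which the paper takes as known; that is a reasonable level of added detail but not a different proof.
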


    \begin{proof}
        There is a natural bijection between $\lyndonwords$ and the set of equivalence classes $[w]$ contained in $\primwords$, namely $w\mapsto[w]$ with inverse $[x] \mapsto \lyn(x)$. Since $w$ is primitive, note $|[w]| = l$, and since $|\allwords| = \binom{l}{k}$, we have 
        \begin{equation}
            |\lyndonwords| = \frac{1}{l}|\primwords| = \frac{1}{l}\left[\binom{l}{k}-|\nonpwords|\right] \ .
        \end{equation} \qed
    \end{proof}

We can count nonprimitive words by breaking $\nonpwords$ into subsets depending on the repetition number. 
%
Denote $\ptwo(l,k,p) = \{w\in\nonpwords: w = x^p\}$. Note that $x$ need not be primitive, i.e., $p$ need not be the repetition number of $w$.  For $p>1$, we have $\ptwo(l,k,p) = \{x^p: x\in\mathbb{W}_2(l/p,k/p)\}$ and thus, 
\begin{equation}
    |\ptwo(l,k,p)| = \binom{l/p}{k/p} \ .
\end{equation}
Clearly $\ptwo(l,k,p)$ is the empty set unless $l$ and $k$ are divisible by $p$, which implies,
\begin{equation}
    \nonpwords = \ptwo(l,k,1) = \bigcup_{p>1, \, p|\gcd(l,k)}\ptwo(l,k,p)\ .
\end{equation}

Notice that this union is not disjoint. For example, $(abb)^{10}\in\ptwo(30,20,10)$, which can also be written as $(abbabb)^{5}\in\ptwo(30,20,5)$ or $(abbabbabbabbabb)^{2}\in\ptwo(30,20,2)$. 

\begin{proposition}\label{prime-nonprim}
    Suppose $p_1$ and $p_2$ are distinct divisors of $l$ and $k$ greater than $1$. Then $p_2|p_1$ if and only if $\ptwo(l,k,p_1)\subseteq \ptwo(l,k,p_2)$. 
\end{proposition}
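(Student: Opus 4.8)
The plan is to rephrase the statement entirely in terms of powers of words. First I would note that for $p>1$,
$\ptwo(l,k,p) = \{w\in\nonpwords : w \text{ is a } p\text{-th power}\}$: in any factorization $w = x^p$ the factor $x$ is forced to have length $l/p$ and $b$-count $k/p$, and a proper power ($p>1$) is never primitive. So the proposition becomes: $p_2\mid p_1$ if and only if every $p_1$-th power in $\allwords$ is also a $p_2$-th power. The only external ingredient I need is the standard uniqueness of the primitive root of a word (equivalently, the Fine--Wilf theorem), in the form: if $z$ is primitive and $z^i = y^j$, then $j\mid i$. Indeed, writing $y = (z')^s$ with $z'$ primitive, the word $z^i = (z')^{sj}$ has both $z$ and $z'$ as primitive roots, so $z = z'$ and $i = sj$.

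The ``if'' direction is then immediate. If $p_2\mid p_1$, write $p_1 = p_2 t$; any $w = x^{p_1}\in\ptwo(l,k,p_1)$ equals $(x^t)^{p_2}$, and $x^t$ has length $l/p_2$ and $b$-count $k/p_2$, so $w\in\ptwo(l,k,p_2)$. Hence the inclusion holds.

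For the converse I would argue by contraposition: assuming $p_2\nmid p_1$, I produce a word in $\ptwo(l,k,p_1)\setminus\ptwo(l,k,p_2)$. Set $m = l/p_1$ and $j = k/p_1$, which are integers because $p_1$ divides both $l$ and $k$; I will assume $0<k<l$, so that $0<j<m$ (if $k=0$ or $k=l$ then $\mathbb{W}_2(l/p,k/p)$, hence $\ptwo(l,k,p)$, reduces to a single word --- $a^{l}$ respectively $b^{l}$ --- for every admissible $p$, and that case is best dealt with separately). The key observation is that $z := a^{m-j}b^{j}$ is a Lyndon word: a nontrivial rotation of $z$ either has strictly fewer leading $a$'s than $z$ itself --- in which case $z$ is the smaller word, since at the first position of disagreement $z$ has an $a$ and the rotation has a $b$ --- or it begins with $b$ and so is larger; hence $z$ is primitive. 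Now put $w := z^{p_1}$. It lies in $\allwords$ and is visibly a $p_1$-th power, so $w\in\ptwo(l,k,p_1)$. If we had $w\in\ptwo(l,k,p_2)$ as well, then $z^{p_1} = y^{p_2}$ for some word $y$, and the uniqueness fact above would give $p_2\mid p_1$, contradicting our assumption. So $w$ witnesses $\ptwo(l,k,p_1)\not\subseteq\ptwo(l,k,p_2)$, completing the contrapositive.

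I do not expect a real obstacle. The two points that need a little care are the explicit verification that $a^{m-j}b^{j}$ is a Lyndon word --- for $\gcd(m,j)>1$ its primitivity does not follow from the letter counts, so the lexicographic argument genuinely does the work --- and the separate handling of the degenerate letter counts $k\in\{0,l\}$. Beyond that, the argument is just routine manipulation of powers of words.
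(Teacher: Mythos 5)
Your proof is correct, and it handles the crucial step by a genuinely different route than the paper, even though both arguments use the same witness word. The ``if'' direction is identical. For the contrapositive, the paper stays entirely self-contained: taking $w=(a^{l_1-k_1}b^{k_1})^{p_1}$ and supposing $w=y^{p_2}$, it argues directly on the letters --- since $l_1\nmid l_2$, the factor $y$ must end partway through a copy of $x$, forcing $w$ to contain a substring $ab^sa$ with $0<s<k_1$, which is incompatible with $w$ being a repetition of $a^{l_1-k_1}b^{k_1}$. You instead verify that $z=a^{m-j}b^{j}$ is a Lyndon word (your lexicographic check of the rotations is sound, and Lyndon implies primitive since a nonprimitive word is fixed by a nontrivial rotation), and then invoke the uniqueness of the primitive root (Fine--Wilf / Lyndon--Sch\"utzenberger) to conclude that $z^{p_1}=y^{p_2}$ forces $p_2\mid p_1$. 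The trade-off is clear: the paper's argument is elementary and imports nothing, while yours imports one standard theorem of combinatorics on words but is cleaner and strictly more general --- it shows that \emph{any} $p_1$-th power of a primitive word of the right length and $b$-count serves as a witness, not just this particular one. Two small remarks: first, you should state explicitly that the uniqueness of the primitive root is being taken as known (your parenthetical derivation ``has both $z$ and $z'$ as primitive roots, so $z=z'$'' is exactly that theorem, not a proof of it); second, your observation that the degenerate cases $k\in\{0,l\}$ need separate treatment is apt --- there the ``only if'' direction actually fails since all the sets $\ptwo(l,k,p)$ coincide --- but the paper excludes these cases in the same way, so this is not a gap relative to the original.
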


\begin{proof}
    First, we assume $0<k<l$, as $k = 0$ or $k = l$ correspond to the trivial cases $\allwords = \{a^l\}$ or $\{b^l\}$.
    We will denote $l_1 = l/p_1$ and similarly $l_2= l/p_2$, with $k_1 = k/p_1,$ and $k_2=k/p_2$.  The conditions $p_2|p_1$, $l_1|l_2$, and $k_1|k_2$ are all equivalent.
    Now suppose $p_2|p_1$, so $p_1 = zp_2$ for some $z \in \bZ$. For all $w\in\ptwo(l,k,p_1)$, there exists a word $x$ such that $w = x^{p_1} = (x^{z})^{p_2}\in\ptwo(l,k,p_2)$.

    Conversely, suppose that $p_2\nmid p_1$. Let $x = a^{l_1-k_1}b^{k_1}$ and $w = x^{p_1}$. Suppose that $w\in\ptwo(l,k,p_2)$. Then $w = x^{p_1} = y^{p_2}$, where the length and $b$-count of $y$ are $l_2$ and $k_2$, respectively. If $l_2 > l_1$, then $y$ begins with $x^z$ for some $z\geq 1$. Since $l_1\nmid l_2$, the word $y$ terminates partway through a repetition of $x$. Since the last letter of $w$ is $b$, the last letter of $y$ must also be $b$, that is, $y = x^za^{l_1-k_1}b^s$ where $0 < s < k_1$. Since the first letter of $x$ (and thus of $y$) is $a$, we know $w = y^{p_2}$ contains the string $ab^sa$, which is a contradiction since $w$ is a repetition of $a^{l_1-k_1}b^{k_1}$. If $l_2 < l_1$, then $y$ is a substring of $x$. Since $y$ must contain a $b$, we have $y = a^{l_1-k_1}b^s$ for $0<s<k_1$, which again implies the contradiction that $w$ contains the string $ab^sa$.\qed
\end{proof}

\begin{corollary} Let $p_1,\dots,p_d$ be the prime divisors of $\gcd(l,k)$. Then 
\begin{equation}
\nonpwords = \bigcup_{j=1}^d\ptwo(l,k,p_j) \ .
\end{equation}
\end{corollary}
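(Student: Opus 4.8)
The plan is to deduce this directly from the decomposition $\nonpwords = \bigcup_{p>1,\, p\mid\gcd(l,k)}\ptwo(l,k,p)$ established above together with Proposition~\ref{prime-nonprim}. The reverse inclusion $\bigcup_{j=1}^d\ptwo(l,k,p_j)\subseteq\nonpwords$ is immediate, since each prime divisor $p_j$ of $\gcd(l,k)$ is in particular a divisor greater than $1$, so $\ptwo(l,k,p_j)$ is one of the sets in the union defining $\nonpwords$.

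For the forward inclusion, it suffices to show that every set $\ptwo(l,k,p)$ occurring in the decomposition of $\nonpwords$ is contained in $\ptwo(l,k,p_j)$ for some prime divisor $p_j$ of $\gcd(l,k)$. Given such a $p$, since $p>1$ it has at least one prime factor; call it $p_j$. Then $p_j\mid p$, and since $p\mid\gcd(l,k)$ we also have $p_j\mid\gcd(l,k)$, so both $p$ and $p_j$ are divisors of $l$ and of $k$ that are greater than $1$. If $p=p_j$ there is nothing to prove; otherwise $p$ and $p_j$ are distinct, and Proposition~\ref{prime-nonprim} (applied with $p_1=p$, $p_2=p_j$) gives $\ptwo(l,k,p)\subseteq\ptwo(l,k,p_j)$. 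Taking the union over all $p>1$ dividing $\gcd(l,k)$ then yields $\nonpwords\subseteq\bigcup_{j=1}^d\ptwo(l,k,p_j)$, and combining the two inclusions completes the proof.

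I do not expect any real obstacle here: the statement is a one-line consequence of Proposition~\ref{prime-nonprim} once the earlier (non-disjoint) union decomposition is in hand. The only points requiring a word of care are the degenerate cases already flagged in the preceding proof — $k=0$ or $k=l$, where $\allwords$ is a singleton — and the case $\gcd(l,k)=1$, in which $d=0$, the right-hand union is empty, and $\nonpwords=\varnothing$ as well, so the identity still holds trivially.
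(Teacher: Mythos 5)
Your proof is correct and follows exactly the route the paper intends: the corollary is stated without proof precisely because it is the immediate consequence of the non-disjoint union decomposition of $\nonpwords$ together with Proposition~\ref{prime-nonprim}, which is how you argue it. Your handling of the case $p=p_j$ (needed because the proposition assumes distinctness) and of the degenerate cases is careful and adds nothing beyond what the paper implicitly relies on.
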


To count $\nonpwords$ it remains to quantify the intersection of $\ptwo(l,k,p)$ for $p$ a prime divisor of $\gcd(l,k)$. In the previous example $(abb)^{10}\in \ptwo(30,20,5)\cap \ptwo(30,20,2)$ which we will see is precisely $\ptwo(30,20,10)$.

\begin{proposition}
    Suppose $p_1$ and $p_2$ are coprime and divisors of $l$ and $k$. Then 
    \begin{equation}
        \ptwo(l,k,p_1)\cap\ptwo(l,k,p_2)=\ptwo(l,k,p_1p_2) \ .
    \end{equation}
\end{proposition}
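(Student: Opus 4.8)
The plan is to dispose of the easy inclusion at once and then recast each set $\ptwo(l,k,p)$ as the fixed-point set of a single rotation, which turns the identity into a statement about subgroups of $\bZ_l$. I may assume $p_1,p_2>1$, since if, say, $p_1=1$ then $\ptwo(l,k,1)=\nonpwords\supseteq\ptwo(l,k,p_2)=\ptwo(l,k,p_1p_2)$ and the claim is immediate. For the inclusion $\ptwo(l,k,p_1p_2)\subseteq\ptwo(l,k,p_1)\cap\ptwo(l,k,p_2)$: if $w=z^{p_1p_2}$ then $w=(z^{p_2})^{p_1}=(z^{p_1})^{p_2}$, so $w$ lies in both sets. (Note $p_1p_2$ divides $l$ and $k$ because $p_1$ and $p_2$ are coprime, so the symbol $\ptwo(l,k,p_1p_2)$ makes sense.)

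For the reverse inclusion I would first record that for any divisor $p>1$ of $l$ and of $k$ one has $\ptwo(l,k,p)=\{w\in\allwords:\sigma^{l/p}(w)=w\}$: if $w=x^p$ with $|x|=l/p$ then rotating $w$ by a full block returns $w$; conversely $\sigma^{l/p}(w)=w$ forces $w_i$ to depend only on $i\bmod(l/p)$, so $w$ is the $p$-fold repetition of its length-$(l/p)$ prefix, which automatically has $b$-count $k/p\in\bN_0$. Consequently $w\in\ptwo(l,k,p_1)\cap\ptwo(l,k,p_2)$ if and only if $\sigma^{l/p_1}(w)=w$ and $\sigma^{l/p_2}(w)=w$.

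Now the stabiliser $H(w)=\{s\in\bZ_l:\sigma^s(w)=w\}$ is a subgroup of $\bZ_l$, so $w$ is fixed by both $\sigma^{l/p_1}$ and $\sigma^{l/p_2}$ exactly when $H(w)$ contains the subgroup they generate. Since $l/p_1$ and $l/p_2$ both divide $l$, that subgroup is generated by $\gcd(l/p_1,l/p_2)$, and coprimality of $p_1,p_2$ gives $\gcd(l/p_1,l/p_2)=l/(p_1p_2)$; hence the condition is equivalent to $\sigma^{l/(p_1p_2)}(w)=w$, i.e.\ $w\in\ptwo(l,k,p_1p_2)$, which together with the easy inclusion proves the proposition. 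The step most likely to need care is this reverse inclusion --- essentially the assertion that a word which is both a $p_1$-th power and a $p_2$-th power is a $(p_1p_2)$-th power --- but the stabiliser reformulation reduces it to the arithmetic fact $\gcd(l/p_1,l/p_2)=l/(p_1p_2)$ together with the classification of subgroups of $\bZ_l$ by divisors of $l$. (Alternatively the reverse inclusion follows from the Fine--Wilf theorem, since $w$ has periods $l/p_1$ and $l/p_2$ with $l/p_1+l/p_2-\gcd(l/p_1,l/p_2)=(p_1+p_2-1)\,l/(p_1p_2)\le l$ because $(p_1-1)(p_2-1)\ge0$, forcing period $l/(p_1p_2)$.) I do not anticipate any genuine obstruction here; the content lies in choosing the right reformulation.
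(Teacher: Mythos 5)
Your proof is correct and follows essentially the same route as the paper: both directions reduce membership in $\ptwo(l,k,p)$ to invariance of $w$ under $\sigma^{l/p}$, and the key step combines the two rotational periods using coprimality of $p_1$ and $p_2$ --- the paper via an explicit B\'ezout identity $l/(p_1p_2) = u(l/p_2) + v(l/p_1)$, you via the equivalent observation that the stabiliser subgroup of $\bZ_l$ contains $\gcd(l/p_1,l/p_2)=l/(p_1p_2)$. Your explicit handling of the $p_i=1$ case and the Fine--Wilf aside are harmless additions but not needed.
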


\begin{proof}
    By proposition \ref{prime-nonprim} we have $\ptwo(l,k,p_1p_2)\subseteq\ptwo(l,k,p_1)\cap\ptwo(l,k,p_2)$.
    Suppose $w = w_1\dots w_l \in \ptwo(l,k,p_1)\cap\ptwo(l,k,p_2)$. Let $l_1 = l/p_1,$ with $l_2 = l/p_2$, and $l_3 = l/(p_1p_2)$. Note that $w = \sigma^{l_1}(w) = \sigma^{l_2}(w)$, that is, for each $j\in\{1,\dots ,l\}$ and each $z\in\bZ$, $w_j = w_{j+zl_1} = w_{j+zl_2}$.
    Since $\gcd(p_1,p_2) = 1$, there exist $u,v\in\bZ$ such that $up_1 + vp_2 = 1$. Multiplying by $l_3$ on both sides, $l_3 = ul_2+vl_1$. Thus, for each $j$, we have $w_{j+l_3} = w_{j+ul_2+vl_1} = w_j$. In other words, $w = \sigma^{l_3}(w)$, so $w \in \ptwo(l,k,p_1p_2)$. \qed
\end{proof}

Using the previous results,
\begin{align}
    |\nonpwords| &= \left|\bigcup_{j = 1}^d\ptwo(l,k,p_j)\right| \\ 
    &=
            \sum_{m|\gcd(l,k),m>1}-\mu(m)|\ptwo(l,k,m)| \\
            &=
            \sum_{m|\gcd(l,k),m>1}-\mu(m)\binom{l/m}{k/m} \ ,
\end{align}
where $\mu$ is the M\"obius function,
\begin{equation}
    \mu(m) = \begin{cases}
        1 & \text{if } m = 1\\
        (-1)^h & \text{if $m$ is the product of $h$ distinct primes}\\
        0 & \text{if $m$ is divisible by a square $>1$} 
    \end{cases} \ .
\end{equation}
Theorem \ref{thm: Lyndon} follows immediately.

To illustrate this result, consider $l = 9$ and $k = 3$, with $|\mathbb{W}_2(9,3)| = 84$. In this case $m = 1,3$ and $\mathbb{P}^c_2(9,3,3) = \{(aab)^3,(aba)^3,(baa)^3\}$. The other $81$ words are primitive, and up to rotation there are $9$ distinct groups of $9$ words, with one word in each group being Lyndon. In particular, 
\begin{equation}
|\bL_2(9,3)| = \frac{1}{9}\left[\binom{9}{3} -\binom{3}{1}\right] = 9 \ .
\end{equation}
Figure \ref{fig:914examples} shows the primitive periodic orbits on $C_9(1,4)$ corresponding to these $9$ words. 
In the next section we define a natural bijection between $\lyndonwords$ and the set of primitive periodic orbits on $C_n(a,b)$ starting at a given vertex.

\begin{figure}[htb]
    \captionsetup[subfigure]{justification=centering}
    \centering
    \begin{subfigure}{0.2\textwidth}
        \includegraphics[width=\linewidth]{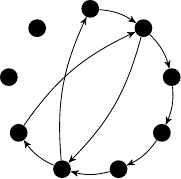}
        \caption*{$\varphi(111111444,0)$}
    \end{subfigure} \hspace{0.1\textwidth}
    \begin{subfigure}{0.2\textwidth}
        \includegraphics[width=\linewidth]{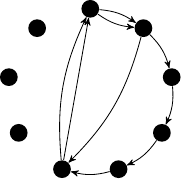}
        \caption*{$\varphi(111114144,0)$}
    \end{subfigure} \hspace{0.1\textwidth}
    \begin{subfigure}{0.2\textwidth}
        \includegraphics[width=\linewidth]{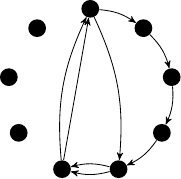}
        \caption*{$\varphi(111114414,0)$}
    \end{subfigure}
    
    \vspace{2em}
    \begin{subfigure}{0.2\textwidth}
        \includegraphics[width=\linewidth]{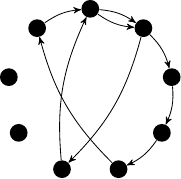}
        \caption*{$\varphi(111141144,0)$}
    \end{subfigure} \hspace{0.1\textwidth}
    \begin{subfigure}{0.2\textwidth}
        \includegraphics[width=\linewidth]{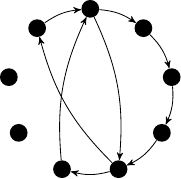}
        \caption*{$\varphi(111141414,0)$}
    \end{subfigure} \hspace{0.1\textwidth}
    \begin{subfigure}{0.2\textwidth}
        \includegraphics[width=\linewidth]{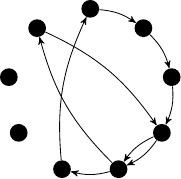}
        \caption*{$\varphi(111144114,0)$}
    \end{subfigure}
    
    \vspace{2em}
    \begin{subfigure}{0.2\textwidth}
        \includegraphics[width=\linewidth]{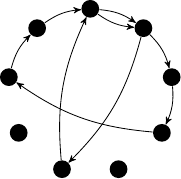}
        \caption*{$\varphi(111411144,0)$}
    \end{subfigure} \hspace{0.1\textwidth}
    \begin{subfigure}{0.2\textwidth}
        \includegraphics[width=\linewidth]{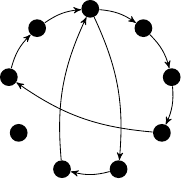}
        \caption*{$\varphi(111411414,0)$}
    \end{subfigure} \hspace{0.1\textwidth}
    \begin{subfigure}{0.2\textwidth}
        \includegraphics[width=\linewidth]{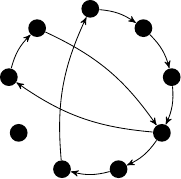}
        \caption*{$\varphi(111414114,0)$}
    \end{subfigure}

\caption{The periodic orbits $\varphi(w,0)$ for each $w\in\bL_2(9,3)$, where $G = C_9(1,4)$. Repeated bonds are shown twice. The bonds are the same in $\varphi(111111444,0)$ and $\varphi(111141414,8)$, but these orbits are distinct due to the orders of step sequences.}
\label{fig:914examples}
\end{figure}
\section{Counting Periodic Orbits}\label{sec: counting}

Let $v$ be a vertex in $C_n(a,b)$. For $w\in\allwords$, let $\psi(w,v)$ denote the unique path $c$ starting at $v$ such that $\st(c) = w$. Note that $\psi(w,v)$ is a circuit if and only if $(l,k)\in\Snab$, in which case we denote $\varphi(w,v) = [\psi(w,v)]$, see
figure \ref{fig:914examples}. Clearly $\varphi$ is a surjective function, from $\allwords \times \bZ_n$ to the set of all periodic orbits of length $l$ and $b$-count $k$, as each periodic orbit contains a circuit with some initial vertex and step sequence. If $c = \psi(w,v)$ is nonprimitive, then the step sequence $w$ is also nonprimitive. The converse, however, is not true, since there are $n$ distinct bonds corresponding to each step size. For example, in $C_9(1,4)$, the step sequence $w=(114)^3$ is primitive, but $\psi(w,v)$ is a nonprimitive circuit for any $v$ (see figure \ref{fig:114114114}). Moreover, $\varphi:\primwords\times\bZ_n \to \mPlk$ is not injective, since for any orbit $p = [c]$, we also have $p = \varphi(\sigma^j(\st(c)),v_j)$ where $v_j$ is the $j$-th vertex in $c$. Replacing primitive words with Lyndon words corrects this by keeping only one rotation of a primitive step sequence.

\begin{figure}[htb]
    \centering
    \includegraphics[width=0.3\linewidth]{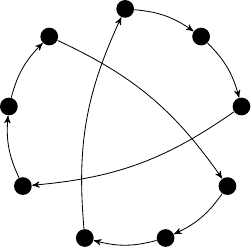}
    \caption{The periodic orbit $\varphi(114114114,0)$ on $C_9(1,4)$, which is a primitive periodic orbit with a nonprimitive step-sequence.}
    \label{fig:114114114}
\end{figure}

\begin{lemma} The map
        $\varphi: \mathbb{L}_2(l,k)\times \bZ_n \to \mPlk$ is injective.
    \end{lemma}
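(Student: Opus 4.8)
The plan is to show that $\varphi$ is injective on $\mathbb{L}_2(l,k)\times\bZ_n$ by producing, from any primitive periodic orbit $p\in\mPlk$ in the image, a way to recover the Lyndon word $w$ and the vertex $v$ uniquely. Suppose $\varphi(w_1,v_1)=\varphi(w_2,v_2)=p$ with $w_1,w_2$ Lyndon words of length $l$ and $b$-count $k$. Then $\psi(w_1,v_1)$ and $\psi(w_2,v_2)$ are circuits in the same periodic orbit $p$, so one is a rotation of the other: $\psi(w_2,v_2)=\sigma^j(\psi(w_1,v_1))$ for some $j\in\{0,\dots,l-1\}$.

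The key step is then to track what rotation does to the step sequence and to the initial vertex. First I would observe that $\st(\sigma^j(c))=\sigma^j(\st(c))$, since rotating a circuit by one edge rotates its step sequence by one letter; hence $w_2=\st(\psi(w_2,v_2))=\sigma^j(\st(\psi(w_1,v_1)))=\sigma^j(w_1)$. But $w_1$ and $w_2$ are both Lyndon words, and a primitive word has exactly one Lyndon rotation in its rotation class; since $w_1$ is already Lyndon, $\sigma^j(w_1)=w_1$ forces $j=0$ (as $w_1$ is primitive, its rotation period is $l$, so the only $j\in\{0,\dots,l-1\}$ with $\sigma^j(w_1)=w_1$ is $j=0$). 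Therefore $w_1=w_2$ and $\psi(w_2,v_2)=\psi(w_1,v_1)$ as circuits. Finally, the initial vertex of a path is determined by the path itself (it is $o(e_1)$), so $v_1=v_2$. This gives injectivity.

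I should be a little careful about one subtlety: the equality $\varphi(w_1,v_1)=\varphi(w_2,v_2)$ only says the two circuits lie in the same equivalence class under $\sigma$, so a priori there could be several valid choices of $j$ if the circuit $\psi(w_1,v_1)$ were itself nonprimitive. This is exactly where primitivity of Lyndon words is used: since $w_1$ is Lyndon it is primitive as a word, and (as noted in the text, or as one checks directly from $u\equiv v+\Delta(c)$) a circuit with primitive step sequence need not be a primitive circuit — wait, that direction is the wrong one. What I actually need is only that $w_1$ primitive implies $\sigma^j(w_1)=w_1 \Rightarrow j\equiv 0\pmod l$, which is immediate from the definition of primitivity of a word. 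So the argument does not even need the circuit $\psi(w_1,v_1)$ to be primitive; it suffices that the two circuits are $\sigma$-equivalent and that comparing step sequences pins down $j=0$, after which comparing initial vertices pins down equality.

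The main obstacle, such as it is, is purely bookkeeping: making the identification $\st\circ\sigma=\sigma\circ\st$ precise (which edge indexes which letter), and being clear that ``same periodic orbit'' for circuits of equal length means ``equal up to some $\sigma^j$.'' Once those are stated cleanly, the uniqueness of the Lyndon rotation does all the work. I would write the proof in four short sentences along the lines above, invoking only the definitions of $\psi$, $\varphi$, $\sigma$, Lyndon words, and the elementary fact that a primitive word of length $l$ is fixed by $\sigma^j$ only when $l\mid j$.
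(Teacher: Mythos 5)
Your proposal is correct and follows essentially the same route as the paper's own proof: identify the two circuits up to a rotation $\sigma^j$, transfer that rotation to the step sequences, use uniqueness of the Lyndon representative in a rotation class to conclude the words coincide, use primitivity to force $j=0$, and then read off equality of the starting vertices. The self-correction in your third paragraph lands in the right place --- only primitivity of the \emph{word} is needed, not of the circuit --- so no gap remains.
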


    \begin{proof}
        Let $w,y\in \mathbb{L}_2(l,k)$ and $v,u\in\mathbb{Z}_n$. Write $c = \psi(w,v)$ and $d = \psi(y,u)$, so $[c] = \varphi(w,v),$ and $[d] = \varphi(y,u)$. If $[c] = [d]$, there exists $s \in \{0,\dots ,l-1\}$ such that $c = \sigma^s(d)$. Then $y = \sigma^s(w)$, and as $y$ and $w$ are Lyndon words, $y = w$, consequently $s = 0$, and $u = v$. \qed
    \end{proof}

Since $\varphi:\lyndonwords\times\bZ_n \to \mPlk$ is not surjective, we add back nonprimitive words which map to primitive circuits. For example, each orbit in figure \ref{fig:914examples} has $9$ distinct rotations, one for each starting vertex, while the orbit in figure \ref{fig:114114114} only has $3$ distinct rotations, since $3$ is the repetition number of its step sequence. We classify nonprimitive words by their primitive roots, with a condition on the winding number which identifies those words which map to nonprimitive orbits.

\begin{proposition}\label{winding vs repetition}
    Suppose $(l,k)\in\Snab$, that is, $la+kd = \omega n$ for some $\omega$, and $w = x^r\in \allwords$ where $r$ is the repetition number of $w$. Then, for a given vertex $v$, the periodic orbit $p = \varphi(w,v)$ is primitive if and only if $\gcd(r,\omega) = 1$. In general, if $p = q^t$ for some periodic orbit $q$, then $q$ is primitive if and only if $t = \gcd(r,\omega)$. 
\end{proposition}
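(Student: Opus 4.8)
The plan is to understand the periodic orbit $p = \varphi(w,v)$ as a closed path whose step sequence is the word $w = x^r$, and to track how the "geometric" repetition (the winding in $\bZ_n$) interacts with the "combinatorial" repetition $r$ of the step sequence. Let $x$ have length $l/r$ and $b$-count $k/r$, so its transit distance is $\Delta(x) = (l-k)a/r + kb/r = \omega n / r$. The path $\psi(w,v)$ traverses the block $x$ exactly $r$ times; after $j$ blocks the current vertex is $v + j\,\omega n/r \pmod n$. I would first show that the path returns to $v$ for the first time precisely after $r/\gcd(r,\omega)$ blocks: indeed $v + j\,\omega n/r \equiv v \pmod n$ iff $n \mid j\,\omega n/r$ iff $r \mid j\omega$ iff $(r/\gcd(r,\omega)) \mid j$. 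Call $t := \gcd(r,\omega)$ and $s := r/t$; then $c_0 := \psi(x^{s}, v)$ is a circuit (it closes up), and $\psi(w,v) = c_0^{\,t}$, so $p = [c_0]^{t} = q^{t}$ where $q := [c_0]$.

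The second step is to show $q$ is primitive, i.e. $c_0$ is not itself a proper power \emph{as a circuit}. Suppose $c_0 = c_1^{\,m}$ for some circuit $c_1$ and $m > 1$. Then the step sequence of $c_0$, which is $x^{s}$, equals $(\st(c_1))^{m}$, so $\st(c_1)$ is a word of length $sl/(rm)$ with $\st(c_1)^m = x^s$. Here I would invoke the elementary fact (essentially Fine--Wilf / the structure of the free monoid, and the fact that $r$ is the \emph{repetition number} of $w$, hence $x$ is primitive as a word) that the only words $y$ with $y^m$ a power of the primitive word $x$ are powers of $x$ itself; thus $\st(c_1) = x^{s/m}$, forcing $m \mid s$. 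But $c_1$ is a circuit, so by the first paragraph's argument applied to $x^{s/m}$, we need $n \mid (s/m)\cdot \omega n / r$, i.e. $r \mid (s/m)\omega$, i.e. $t \mid (s/m)\cdot \gcd(r,\omega) \cdot (s\omega)/(s\,\gcd(r,\omega))$... more cleanly: $s = r/t$ is by construction the \emph{minimal} number of $x$-blocks producing a closed path, so no proper divisor $s/m$ of $s$ can give a circuit — contradiction. Hence $c_0$ is a primitive circuit and $q$ is a primitive periodic orbit.

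Combining the two steps: $p = q^{t}$ with $q$ primitive and $t = \gcd(r,\omega)$, and since the primitive root and repetition number of a periodic orbit are unique, this $t$ is \emph{the} repetition number of $p$. In particular $p$ is primitive iff $t = \gcd(r,\omega) = 1$, which is the first claim; and if $p$ is written as $q^t$ for \emph{any} periodic orbit $q$, then $q$ is primitive exactly when $t$ is the repetition number of $p$, i.e. $t = \gcd(r,\omega)$, which is the general statement.

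I expect the main obstacle to be the converse direction of the second step — ruling out that the closed-up block $c_0 = \psi(x^{s},v)$ could factor as a smaller circuit. The two ingredients needed are (i) the word-combinatorics fact that a power of a primitive word has only powers of that word as "roots," and (ii) the minimality of $s = r/\gcd(r,\omega)$ as the first return time, which prevents a shorter sub-circuit from closing. Care is also needed at the boundary cases $k = 0$ and $k = l$ (step sequences $a^l$ or $b^l$), where $x$ is a single letter; these are handled directly since then the word-primitivity claim is immediate. It will be worth stating the first-return computation $n \mid j\omega n/r \iff (r/\gcd(r,\omega)) \mid j$ as a small displayed lemma, since it is used in both directions.
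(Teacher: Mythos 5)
Your proposal is correct and rests on the same two ingredients as the paper's proof: the transit-distance computation showing that $\psi(x^j,v)$ closes up exactly when $r/\gcd(r,\omega)$ divides $j$, and a word-combinatorics argument (primitivity of $x$ forces any root of $x^s$ to be a power of $x$, which the paper proves by hand via the $x=\sigma^\rho(x)$ contradiction). The only difference is organizational — you identify the repetition number of $p$ as $\gcd(r,\omega)$ in one pass and read off both claims, whereas the paper proves the two directions of the primitivity criterion separately and then reduces the general statement to it — but the mathematics is the same.
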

\begin{proof}
    Suppose $\gcd(r,\omega) = h > 1$. Let $s = r/h$, and $c = \psi(x^s,v)$. Note that $c$ is a path of length $l/h$ and $b$-count $k/h$, so the transit distance $  \Delta(c) = \Delta(p)/h = \omega n/h$. Since $h|\omega$, the path $c$ is a circuit. Then $p=[c^h]$ is not primitive. 

    Conversely, suppose $p$ is not primitive. Then there is a primitive circuit $c$ such that $p = [c^s]$ and $s>1$. In particular, $(l/s,k/s)\in S_n(a,b)$ with winding number $\omega/s$, so $s|\omega$. Now it suffices to show $s|r$. Write $y = \st(c)$ and $l_x,l_y$ for the length of $x$ and $y$, respectively. Since $l = l_ys = l_xr$, it suffices to show that $l_x|l_y$. As $x$ is the root of $w$, we have $l_x \leq l_y$. For the sake of contradiction, suppose $l_x \nmid l_y$. Then $l_y = \mu l_x+ \rho$ for some $\mu > 0$, $0<\rho<l_x$. As in the proof of proposition \ref{prime-nonprim}, $y$ is a repetition of $x$ followed by a truncation of $x$, so $y = x^\mu x_1\dots x_\rho$. Thus, as $y^s=x^r$, we have,
    \begin{align}
        x^\mu x_1\dots x_\rho x_1x_2x_3\dots  &= x^\mu x_1\dots x_\rho x_{\rho+1}x_{\rho+2}x_{\rho+3}\dots \ .
    \end{align}
    Hence, $x = \sigma^\rho(x)$ for $0<\rho < l_x$, which is a contradiction since $x$ is primitive.

    Now suppose $p = q^t$ where $q$ is a periodic orbit. Since $x$ is primitive, we have $q = \varphi(x^{r/t},v)$ where $r/t$ is the repetition number of $\st(q)$. Note that $\omega/t$ is the winding number of $q$, so $(l/t)a + (k/t)d = (\omega/t) n$. In particular, $t$ is a common divisor of $r$ and $l$. By the first part of the proposition, $q$ is primitive if and only if $\gcd(r/t,\omega /t) = 1$, or equivalently, $\gcd(r,\omega) = t$.    \qed
\end{proof}

To generate orbits that do not correspond with Lyndon words, we can use the nonprimitive words $w = x^r$ where $x$ is a Lyndon word and $r$ is coprime with $\omega$. Denote by $\bL_2^q(l,k)$ the set $\{x^q:x\in\mathbb{L}_2(l/q,k/q)\}$. Note that the notation is intentionally different to $\ptwo(l,k,p)$, since $q$, unlike $p$, must be the repetition number. In particular, note that $\bL_2^{q_1}(l,k)$ and $\bL_2^{q_2}(l,k)$ are disjoint if $q_1\neq q_2$.

Now we show that it is sufficient to count over all common divisors of $l$ and $k$ coprime with $\omega$.

\begin{lemma}\label{lem: Q}
    Suppose $la+kd = \omega n$ for some $\omega \in\bZ^+$. For each $p$ in the set $\mPlk$, the primitive periodic orbit $p = \varphi(w,v)$ for some $w\in \mathbb{L}_2^q(l,k)$ where $q|\gcd(l,k)$ and $ \gcd(q,\omega) = 1$.
\end{lemma}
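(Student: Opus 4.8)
The plan is to start from an arbitrary primitive orbit, represent it by a circuit, read off its step sequence, factor that word as its primitive root raised to the repetition power, and then cyclically shift the circuit so that the root becomes a Lyndon word. First I would choose a circuit $c=(e_1,\dots,e_l)$ with $[c]=p$ and set $w=\st(c)\in\allwords$; since $\psi(w,o(c))=c$, we have $p=\varphi(w,o(c))$. Let $r$ be the repetition number of the word $w$ and write $w=x^r$ with $x$ primitive. Because $x$ has length $l/r$ and $b$-count $k/r$, both are nonnegative integers, so $r\mid l$ and $r\mid k$, i.e. $r\mid\gcd(l,k)$. Moreover, since $p=\varphi(w,o(c))$ is primitive and $r$ is the repetition number of $w$, proposition \ref{winding vs repetition} gives $\gcd(r,\omega)=1$.

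It then remains to make the root a Lyndon word. As $x$ is primitive, $\lyn(x)$ is defined; write $y=\lyn(x)=\sigma^t(x)$ for some $t\in\{0,\dots,l/r-1\}$. Then
\begin{equation*}
    y^r=\bigl(\sigma^t(x)\bigr)^r=\sigma^t(x^r)=\sigma^t(w)=\st\bigl(\sigma^t(c)\bigr)\ .
\end{equation*}
The circuit $c':=\sigma^t(c)$ satisfies $[c']=[c]=p$, so with $v'=o(c')$ we get $c'=\psi(y^r,v')$ and hence $p=\varphi(y^r,v')$. Finally $y\in\mathbb{L}_2(l/r,k/r)$ and, $y$ being primitive, $r$ is exactly the repetition number of $y^r$; thus $y^r\in\mathbb{L}_2^r(l,k)$, and taking $q=r$ gives $p=\varphi(y^r,v')$ with $q\mid\gcd(l,k)$ and $\gcd(q,\omega)=1$, as claimed.

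\textbf{Main obstacle.} There is no deep difficulty; the argument is bookkeeping resting on the already-established proposition \ref{winding vs repetition}. The two points requiring care are (i) that rotating a circuit by $t$ rotates its step sequence by $t$, which is what lets a rotation of $c$ realize the Lyndon root of $x$ while staying in the class $p$, and (ii) that the Lyndon root $y$ is primitive, so $r$ stays the repetition number of $y^r$ — this is precisely what is needed to land in $\mathbb{L}_2^r(l,k)$ (where the exponent must be the repetition number) rather than only in $\ptwo(l,k,r)$.
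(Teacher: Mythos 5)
Your proposal is correct and follows essentially the same route as the paper's own proof: represent $p$ by a circuit, factor its step sequence as a power of its primitive root, apply proposition \ref{winding vs repetition} to get $\gcd(q,\omega)=1$, and replace the root by its Lyndon rotation. The only difference is that you spell out the rotation bookkeeping ($\sigma^t(x^r)=(\sigma^t(x))^r$ and $\st(\sigma^t(c))=\sigma^t(\st(c))$) that the paper leaves implicit in the phrase ``for some vertex $v$ in $c$.''
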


\begin{proof}
    Let $p = [c]\in \mPlk$. Write $\st(c) = w_0 = x_0^q$ where $x_0$ is primitive. Then $q|\gcd(l,k)$. Since $p$ is primitive, $\gcd(q,\omega) = 1$ by proposition \ref{winding vs repetition}. Then for $w = \lyn(x_0)^q\in\bL_2^q(l,k)$, we have $p = \varphi(w,v)$ for some vertex $v$ in $c$. \qed
\end{proof}

\begin{corollary}\label{Surjective Overkill} Suppose $la+kd = \omega n$. Let $Q$ be the set of divisors of $\gcd(l,k)$ that are coprime with $\omega$. Then
\begin{equation}
    \displaystyle\varphi: \bigcup_{q\in Q}\mathbb{L}_2^{q}(l,k)\times \bZ_n \to \mPlk
    \end{equation}
    is surjective.
\end{corollary}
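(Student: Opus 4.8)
The plan is to read this off from Lemma \ref{lem: Q} almost immediately, once we check that the displayed map is well-defined with codomain $\mPlk$.

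First I would verify that $\varphi$ really does send $\bigcup_{q\in Q}\mathbb{L}_2^q(l,k)\times\bZ_n$ into $\mPlk$. Fix $q\in Q$, a Lyndon word $x\in\mathbb{L}_2(l/q,k/q)$, and set $w=x^q$; then $w$ has length $l$ and $b$-count $k$, so $w\in\allwords$. Since $la+kd=\omega n$ (and, implicitly, $0\le k\le l$ with $l>0$; otherwise both sides of the claimed surjection are empty and there is nothing to prove), we have $(l,k)\in\Snab$, so for every $v\in\bZ_n$ the path $\psi(w,v)$ is a circuit and $\varphi(w,v)=[\psi(w,v)]$ is a periodic orbit of length $l$ and $b$-count $k$. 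To land in $\mPlk$ we need this orbit to be \emph{primitive}: because $x$ is a Lyndon word it is primitive, so $q$ is exactly the repetition number of $w=x^q$; since $q\in Q$ we have $\gcd(q,\omega)=1$, and Proposition \ref{winding vs repetition} then says $\varphi(w,v)$ is primitive. Hence the map is into $\mPlk$ as claimed.

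For surjectivity there is nothing new to do: it is precisely the statement of Lemma \ref{lem: Q}. Given any $p\in\mPlk$, that lemma produces a divisor $q$ of $\gcd(l,k)$ with $\gcd(q,\omega)=1$ — i.e. $q\in Q$ — together with a word $w\in\mathbb{L}_2^q(l,k)$ and a vertex $v$ such that $p=\varphi(w,v)$. So every element of $\mPlk$ lies in the image, and $\varphi$ restricted to $\bigcup_{q\in Q}\mathbb{L}_2^q(l,k)\times\bZ_n$ is surjective.

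There is essentially no obstacle: the corollary is pure bookkeeping on top of Lemma \ref{lem: Q}. The only step that requires a moment's care is the codomain check — invoking Proposition \ref{winding vs repetition} to confirm that a repetition $x^q$ of a Lyndon word with $q$ coprime to $\omega$ genuinely yields a primitive orbit — together with dispatching the degenerate cases ($k=0$, $k=l$, or parameter values for which $\mathbb{L}_2^q(l,k)$ is empty) as vacuous. This corollary is the surjection needed to set up the (non-injective) count of $\mPlk$ in the next section.
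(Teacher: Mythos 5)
Your proof is correct and follows the same route as the paper, which presents this corollary as an immediate consequence of Lemma \ref{lem: Q}; your additional check that the image actually lands in $\mPlk$ (via Proposition \ref{winding vs repetition}) is a sensible piece of extra care but not a different argument.
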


Conversely, each word in $\bL_2^q(l,k)$ maps  to $n/q$ distinct orbits, so restricting the starting vertices can make $\varphi$ injective.

\begin{proposition}\label{bijective phi}
    Suppose $la+kd = \omega n$. Let $Q$ be the set of divisors of $\gcd(l,k)$ that are coprime with $\omega$. Then 
\begin{equation}
    \varphi: \bigcup_{q\in Q}\mathbb{L}_2^{q}(l,k)\times \bZ_{n/q} \to \mPlk
\end{equation}
    is bijective.
\end{proposition}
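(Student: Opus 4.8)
\emph{Proof proposal.} The plan is to bootstrap from Corollary \ref{Surjective Overkill}, which already gives surjectivity of $\varphi$ on the larger domain $\bigcup_{q\in Q}\mathbb{L}_2^{q}(l,k)\times\bZ_n$, and to show that shrinking the vertex set to $\bZ_{n/q}$ costs nothing in surjectivity while repairing injectivity. First I would record the one structural fact used throughout, a \emph{block-shift} identity: if $w=x^q$ with $x\in\mathbb{L}_2(l/q,k/q)$, then rotating the circuit $\psi(w,v)$ by the $l/q$ edges of a single copy of $x$ produces $\psi(w,v')$ with $v'\equiv v+\omega n/q\pmod n$, since $\sigma^{l/q}(w)=w$ and the transit distance of each block is $\Delta(\psi(x,v))=((l-k)a+kb)/q=\omega n/q$. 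Iterating, $\varphi(w,v)=\varphi(w,v+j\omega n/q)$ for all $j\in\bZ$. Because $\gcd(q,\omega)=1$ (the defining property of $q\in Q$), the set of reachable start vertices $\{v+j\omega n/q\bmod n: 0\le j<q\}$ equals the coset $v+\tfrac nq\bZ_n$, which contains a unique representative in $\{0,\dots,\tfrac nq-1\}$. (In particular $q\mid\gcd(l,k)$ divides $la+kd=\omega n$ and is coprime to $\omega$, so $q\mid n$ and $\bZ_{n/q}$ is meaningful.)

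Granting this, surjectivity of the restricted map is immediate: given $p\in\mPlk$, Corollary \ref{Surjective Overkill} supplies $p=\varphi(x^q,v_0)$ for some $q\in Q$ and $v_0\in\bZ_n$, and I may replace $v_0$ by the unique vertex of its coset $v_0+\tfrac nq\bZ_n$ lying in $\bZ_{n/q}$, without changing the orbit.

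For injectivity — which simultaneously forces the images of the pieces for distinct $q$ to be disjoint — suppose $\varphi(x^q,v)=\varphi(y^{q'},v')$ with $x,y$ Lyndon, $v\in\bZ_{n/q}$, $v'\in\bZ_{n/q'}$. Then $\psi(x^q,v)=\sigma^s\bigl(\psi(y^{q'},v')\bigr)$ for some $0\le s<l$. Comparing step sequences gives $x^q=\sigma^s(y^{q'})$; since rotation preserves the repetition number and a Lyndon word is primitive, $q=q'$. Writing $s=s_1(l/q)+s_0$ with $0\le s_0<l/q$, and using $\sigma^{s_1 l/q}(y^q)=y^q$ together with the identity $\sigma^{s_0}(y^q)=(\sigma^{s_0}(y))^q$ (valid since $s_0<|y|$), one gets $x=\sigma^{s_0}(y)$; as $x$ and $y$ are Lyndon words that are rotations of each other, this forces $s_0=0$ and $x=y$. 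Finally, comparing the origins of the two circuits yields $v\equiv v'+s_1\omega(n/q)\pmod n$; reducing modulo $n/q$ gives $v=v'$ (both lie in $\{0,\dots,n/q-1\}$), whence $s_1\omega(n/q)\equiv 0\pmod n$, i.e.\ $q\mid s_1\omega$, so $q\mid s_1$ by coprimality, so $l\mid s$ and $s=0$. Hence $(x^q,v)=(y^{q'},v')$.

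I expect the main obstacle to be the rotation-index bookkeeping in the injectivity step: $s$ must be split into whole blocks plus a remainder, the remainder killed by the Lyndon property (which at the same time pins down the word), and the block count killed by the interplay between $\gcd(q,\omega)=1$ and the restriction $v\in\bZ_{n/q}$. The two auxiliary facts that make this go through are the purely combinatorial identity $\sigma^{s_0}(x^q)=(\sigma^{s_0}(x))^q$ for $s_0<|x|$ and the block-shift identity advancing the origin by exactly $\omega n/q$.
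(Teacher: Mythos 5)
Your proof is correct and follows essentially the same route as the paper's: both rest on the block-shift identity (rotating by one copy of $x$ advances the origin by $\Delta(x)=\omega n/q$, and $\gcd(q,\omega)=1$ turns these shifts into the full subgroup of order $q$ in $\bZ_n$) together with the Lyndon property forcing the rotation index to be a multiple of $l/q$. If anything, your injectivity step is slightly more complete than the paper's, since you explicitly rule out collisions between different words and different values of $q$ (via rotation-invariance of the repetition number), whereas the paper reduces to the single-word case and leaves that disjointness implicit.
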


\begin{proof}
    Let $w \in \bL_2^q(l,k)$ where $q\in Q$. It suffices to show that $\varphi(w,0) = \varphi(w,v)$ if and only if $v \equiv jn/q$ (\text{mod} $n$) for some $j = 0,\dots ,q-1$, as then for every orbit $p$ with step sequence $w$, there is a unique vertex $u \in\bZ_{n/q}$ such that $p = \varphi(w,u)$.

    Suppose $v \equiv jn/q$ (\text{mod} $n$) for some $j = 0,\dots ,q-1$. Since $\gcd(\omega, q) = 1$, there exist integers $m,z$ such that $zq - m\omega = j$. Note that the step sequence is a repetition $w = x^q$ where $\Delta(x) = \omega n/q$, so we have $v + m\Delta (x) = zn$ and $\varphi(w,v) = \varphi (w,v+m\Delta(x)) = \varphi (w,0)$.

    Conversely, suppose $\varphi(w,0) = \varphi(w,v)$. Then $\psi(w,0) = \sigma^s(\psi(w,v))$ for some $s = 0,\dots ,l-1$; that is, the $p$-th vertex of $\sigma^s(\psi(w,v))$ is the $(s+p)$-th vertex of $\psi(w,0)$. Since $w = x^q$ where $x$ is Lyndon, we know that $s = ml/q$ for some $m \in \{0,\dots , q-1\}$.  Thus,
    \begin{align}
        w_1 + w_2 + \dots  + w_{m(l/q)+p} &= v + w_1 + w_2 + \dots + w_{p}\\
        m(x_1+\dots+x_{l/q}) + w_1 + w_2 + \dots + w_p &= v + w_1 + w_2 + \dots + w_{p}\\
        m\Delta(x) &= v\\
        m\frac{\omega n}{q} &= v \ ,
    \end{align}
    and since $\gcd(\omega,q) = 1$, we have $v \equiv jn/q$ (\text{mod} $n$) for some $j = 0,\dots,q-1$. \qed
\end{proof}

\begin{theorem}\label{unreduced sum}
  Let $C_n(a,b)$ be a connected $2$-regular circulant digraph and 
    suppose $la+kd = \omega n$. Let $Q$ be the set of divisors of $\gcd(l,k)$ that are coprime with $\omega$. Then,  
    \begin{equation}
        |\mPlk|=\frac{n}{l}\sum_{q \in Q}\sum_{m|\gcd(l,k)/q}\mu(m)\binom{l/(qm)}{k/(qm)}\ ,    
        \end{equation}
    where $\mu$ is the M\"obius function.
\end{theorem}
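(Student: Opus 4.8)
The plan is to assemble this formula directly from the two main ingredients already in hand: the bijection of Proposition \ref{bijective phi} and the Lyndon word count of Theorem \ref{thm: Lyndon}. First I would invoke Proposition \ref{bijective phi}, which says that for $Q$ the set of divisors of $\gcd(l,k)$ coprime with $\omega$, the map
$$\varphi: \bigcup_{q\in Q}\mathbb{L}_2^{q}(l,k)\times \bZ_{n/q} \to \mPlk$$
is a bijection. Since the sets $\mathbb{L}_2^{q}(l,k)$ are pairwise disjoint for distinct $q$ (noted just before the statement, as $q$ is forced to be the repetition number), counting the domain gives
$$|\mPlk| = \sum_{q\in Q} |\mathbb{L}_2^{q}(l,k)|\cdot \frac{n}{q}\ .$$

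Next I would identify $|\mathbb{L}_2^{q}(l,k)|$ with a Lyndon count for a shorter word. The map $x\mapsto x^{q}$ restricts to a bijection from $\mathbb{L}_2(l/q,k/q)$ onto $\mathbb{L}_2^{q}(l,k)$: surjectivity is the definition of $\mathbb{L}_2^{q}(l,k)$, and injectivity holds because a word has at most one $q$-th root. This uses only that $q\mid\gcd(l,k)$, so $l/q$ and $k/q$ are positive integers. Hence $|\mathbb{L}_2^{q}(l,k)| = |\mathbb{L}_2(l/q,k/q)|$, and since $q\mid\gcd(l,k)$ we have $\gcd(l/q,k/q)=\gcd(l,k)/q$, so Theorem \ref{thm: Lyndon} yields
$$|\mathbb{L}_2(l/q,k/q)| = \frac{q}{l}\sum_{m\,\mid\,\gcd(l,k)/q}\mu(m)\binom{l/(qm)}{k/(qm)}\ .$$

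Finally I would substitute this into the sum over $q\in Q$; the factors $\frac{n}{q}$ and $\frac{q}{l}$ combine to $\frac{n}{l}$, producing exactly the claimed formula. The only points needing a brief word of care are the disjointness of the pieces $\mathbb{L}_2^{q}(l,k)$ (already recorded) and the degenerate cases $k=0$ or $k=l$, where $\allwords$ is a single word $a^l$ or $b^l$ and both sides reduce to $n/l$ times a sum that telescopes to $1$; these are checked directly. I do not expect a genuine obstacle here — the content of the theorem lives entirely in Proposition \ref{bijective phi} and Theorem \ref{thm: Lyndon}, and this statement is their bookkeeping consequence.
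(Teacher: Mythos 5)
Your proposal is correct and follows essentially the same route as the paper: the paper's proof likewise counts the disjoint union in Proposition \ref{bijective phi}, replaces $|\mathbb{L}_2^{q}(l,k)|$ by $|\mathbb{L}_2(l/q,k/q)|$, and applies Theorem \ref{thm: Lyndon} so that the factors $n/q$ and $q/l$ combine to $n/l$. Your extra remarks on the $q$-th root bijection and the degenerate cases $k=0,l$ are fine but not needed beyond what the paper records.
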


\begin{proof}
    Note that the union in proposition \ref{bijective phi} is disjoint. Thus
    \begin{align}
    |\mPlk| &= \sum_{q\in Q}|\bZ_{n/q}|\cdot|\mathbb{L}_2^{q}(l,k)|\\
    &= \sum_{q \in Q}\frac{n}{q}\left|\mathbb{L}_2\left(\frac{l}{q},\frac{k}{q}\right)\right|\\
    &= n\sum_{q \in Q}\frac{1}{q} \left(\frac{1}{l/q}\sum_{m|\gcd(l,k)/q}\mu(m)\binom{l/(qm)}{k/(qm)}\right) \ .
\end{align} \qed
\end{proof}

\begin{proposition}\label{sum reduction thm}
    Let $\gamma,\omega\in\bN$ and $Q$ be the set of 
     divisors of $\gamma$ that are coprime with $\omega$.  
    For any function $f:\bN \to \mathbb{R}$, 
    \begin{equation}\label{eq:sum reduction}
        \sum_{q\in Q}\sum_{s|\gamma/q}\mu(s)f(qs) = \sum_{m|\gcd(\gamma,\omega)}\mu(m)f(m) \ . 
    \end{equation}
\end{proposition}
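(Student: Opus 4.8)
The plan is to prove the identity by comparing, for each positive integer $n$, the coefficient of $f(n)$ on the two sides. On the right-hand side, $f(n)$ occurs with coefficient $\mu(n)$ when $n \mid \gcd(\gamma,\omega)$ and with coefficient $0$ otherwise. On the left-hand side, a term $f(n)$ is produced by exactly those pairs $(q,s)$ in the double sum with $qs = n$. Since $q \mid \gamma$ and $s \mid \gamma/q$ together force $n = qs \mid \gamma$, and since, conversely, for any $n \mid \gamma$ the constraints ``$q \mid \gamma$ and $s \mid \gamma/q$'' are equivalent to ``$q \mid n$ and $s = n/q$'', the coefficient of $f(n)$ on the left is
\[
  c_n \;=\; \sum_{\substack{q \mid n \\ \gcd(q,\omega) = 1}} \mu\!\left(\frac{n}{q}\right),
\]
and only those $c_n$ with $n \mid \gamma$ actually appear, so the left-hand side is $\sum_{n \mid \gamma} c_n f(n)$. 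Thus it suffices to show that $c_n = \mu(n)$ whenever $n \mid \omega$ and $c_n = 0$ whenever $n \nmid \omega$, since then summing over $n \mid \gamma$ leaves exactly $\sum_{n \mid \gcd(\gamma,\omega)} \mu(n) f(n)$.

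To evaluate $c_n$, I would split $n$ according to the primes of $\omega$: write $n = n_1 n_2$ with $n_1 = \prod_{p \mid \omega} p^{v_p(n)}$ and $n_2 = n/n_1$, so that $\gcd(n_1,n_2) = 1$ and $\gcd(n_2,\omega) = 1$. A divisor $q \mid n$ is coprime to $\omega$ exactly when $q \mid n_2$, and for such $q$ one has $n/q = n_1(n_2/q)$ with $\gcd(n_1, n_2/q) = 1$; multiplicativity of $\mu$ then gives
\[
  c_n \;=\; \mu(n_1) \sum_{q \mid n_2} \mu\!\left(\frac{n_2}{q}\right),
\]
and the inner sum is $1$ if $n_2 = 1$ and $0$ otherwise. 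Hence $c_n = 0$ unless every prime factor of $n$ divides $\omega$ (equivalently $n_2 = 1$), in which case $c_n = \mu(n_1) = \mu(n)$. Finally, $\mu(n) \neq 0$ forces $n$ squarefree, and a squarefree $n$ all of whose primes divide $\omega$ satisfies $n \mid \omega$; conversely if $n \mid \omega$ then every prime of $n$ divides $\omega$ so $n_2 = 1$. Therefore $c_n = \mu(n)$ when $n \mid \omega$ and $c_n = 0$ otherwise, which completes the argument.

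An equivalent route is to recognize $c_n$ as the Dirichlet convolution of $\mu$ with the indicator $\chi(m) = 1$ if $\gcd(m,\omega)=1$ and $0$ otherwise: both factors are multiplicative, hence so is $n \mapsto c_n$, and a one-line computation on prime powers gives $c_{p^a} = \chi(p^a) - \chi(p^{a-1})$ for $a \ge 1$, which equals $-1$ if $a = 1$ and $p \mid \omega$ and equals $0$ in every other case; taking the product over primes recovers the same formula. I do not expect a genuine obstacle here: the only step that needs real care is the bookkeeping in the reindexing of the double sum — verifying that each pair $(q,s)$ is counted exactly once and that the divisibility side-conditions collapse to $n = qs \mid \gamma$ — after which everything is a routine Möbius computation.
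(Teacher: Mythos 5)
Your proposal is correct and follows essentially the same route as the paper's proof: both compare the coefficient of $f(n)$ on each side, split $n$ into its part supported on the primes of $\omega$ and its part coprime to $\omega$ (your $n=n_1n_2$ is the paper's $x=zy$ with $y$ the greatest divisor of $x$ coprime with $\omega$), and conclude via multiplicativity of $\mu$ and $\sum_{j\mid y}\mu(j)=0$ for $y\neq 1$. Your treatment of the reindexing and of the non-squarefree edge case is slightly more explicit than the paper's, but the argument is the same.
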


\begin{proof}
Fix $x=q_0 s_0$ for some $q_0\in Q$ and $s_0|\gamma/q_0$, so $x|\gamma$.  Let $y$ be the  greatest divisor of $x$ coprime with $\omega$.  Then $x=yz$ for some $z\in \mathbb{N}$.  If $qs=x$ for $q\in Q$ and $s|\gamma/q$ then $s=zj$ and $j|y$.
On the other hand, if $j|y$ we can set $s=zj$ and $x$ can be factored so $x=s(y/j)$ with $y/j\in Q$.  Hence, the coefficient of $f(x)$ in the left hand side of (\ref{eq:sum reduction}) is
\begin{align}\label{eq:mobius property}
    \sum_{j|y} \mu (zj) &= \mu(z)\sum_{j|y} \mu(j) 
\end{align}
as the M\"obius function is multiplicative. As $\sum_{j|y} \mu(j) = 0$ if $y\neq 1$, the coefficient of $f(x)$ is zero unless $x|\omega$, in which case the coefficient is $\mu(x)$.
\qed
\end{proof}

Combining theorem \ref{unreduced sum} and proposition \ref{sum reduction thm}, with $\gamma = \gcd(l,k)$ and $f(x) = \binom{l/x}{k/x}$ we obtain theorem \ref{thm: old main}.    A periodic orbit $p$ of length $l$ and winding number $\omega$ has $b$-count $k_\omega = (\omega n - la)/d \leq l$, see equation (\ref{eq: la+kd}). In particular, such an orbit exists if and only if $k_\omega$ is an integer and $la/n \leq \omega \leq lb/n$. Using theorem $\ref{thm: old main}$ and summing over the allowed values of the $b$-count 
we have,
    \begin{equation}
        |\mathcal{P}_l(C_n(a,b))| = \frac{n}{l}\sum_{\omega = \lceil la/n \rceil}^{\lfloor lb/n \rfloor} \left( \sum_{m|\gcd(l,k_\omega,\omega)}\mu(m)\binom{l/m}{k_\omega/m}\right) \ . 
    \end{equation}
    Finally, note that the divisors $m$ of $\omega$ for which $\binom{l/m}{k_\omega/m}\neq 0$ are precisely the divisors of $\gcd(l,k_\omega,\omega)$ which proves theorem \ref{thm: new main}.
    
\section{Concluding Remarks}\label{sec: conclusions}

For $2$-regular directed circulant graphs of arbitrary order and step sizes, we have evaluated the number of primitive periodic orbits of a given length. In doing so, we have described the path lengths and step size counts for which cycles exist, and obtained a formula for the number of Lyndon words on two letters with a given length and letter count.

 To illustrate these results consider periodic orbits on 
 the circulant graph $C_{440}(5,14)$ of length $l = 360$ and $b$-count $k = 240$. 
 The winding number for these orbits is $\omega = 9$. From lemma \ref{lem: Q} any step sequence $w$ in $\mathbb{W}_2(360,240)$ corresponding to a primitive orbit will have a repetition number $q$ which divides $l$ and $k$, and is coprime with $\omega$, i.e., $q \in Q = \{1,2,4,8,5,10,20,40\}$. For each $q$, we can evaluate $|\mathbb{L}_2(360/q,240/q)|$ the number of words where the root is a Lyndon word of length $360/q$ and $b$-count $240/q$, using theorem \ref{thm: Lyndon}.  To simplify the formulas let $B_t = \binom{360/t}{240/t}$.
\begin{align}
    q& = 1& 
    |\mathbb{L}_2(360,240)| &= \frac{1}{360}\left(B_1 - B_2 - B_3 - B_5 + B_6 + B_{10} + B_{15} - B_{30}\right) \\
    q &= 2& 
    |\mathbb{L}_2(180,120)| &= \frac{1}{180}\left(B_2 - B_4 - B_6 - B_{10} + B_{12} + B_{20} \right. \nonumber \\
    &&& \qquad \qquad \qquad \qquad \qquad \qquad \qquad \quad \left. + B_{30} - B_{60}\right)\\
    q &= 4 &
    |\mathbb{L}_2(90,60)| &= \frac{1}{90}\left(B_4 - B_8 - B_{12} - B_{20}  + B_{24} + B_{40} \right. \nonumber \\
    &&& \qquad \qquad \qquad \qquad \qquad \qquad \qquad \quad \left. + B_{60} - B_{120}\right)\\
    q &= 8& 
    |\mathbb{L}_2(45,30)| &= \frac{1}{45}\left(B_8 - B_{24} - B_{40} + B_{120}\right)\\
    q &= 5& 
    |\mathbb{L}_2(72,48)| &= \frac{1}{72}\left(B_5 - B_{10} - B_{15} + B_{30}\right)\\
    q &= 10& 
    |\mathbb{L}_2(36,24)| &= \frac{1}{36}\left(B_{10} - B_{20} - B_{30} + B_{60}\right)\\
    q &= 20& 
    |\mathbb{L}_2(18,12)| &= \frac{1}{18}\left(B_{20}-B_{40}-B_{60}+B_{120}\right)\\
    q &= 40& 
    |\mathbb{L}_2(9,6)| &= \frac{1}{9}\left(B_{40}-B_{120}\right)
\end{align}

To count primitive orbits, each of these quantities is multiplied by $n/q$ (the number of inequivalent starting vertices) and summed, as in the proof of theorem \ref{unreduced sum}.  After cancellation, using proposition \ref{sum reduction thm}, we see that the number of primitive orbits is,
\begin{equation}
    |\mathcal{P}_{360,240}(C_{440}(5,14))| = \frac{440}{360}\left(B_1 - B_3\right) = \frac{440}{360}\left(\binom{360}{240}-\binom{120}{80}\right).
\end{equation}

The cancellation in proposition \ref{sum reduction thm}, is illustrated in figure \ref{fig:binom grid}.
The figure shows all values of $t = qm$ with $q\in Q$ and $m$ a square-free divisor of $\gcd(l/q,k/q)$.  The arrows in the figure join pairs of values that differ by multiplication by a prime. For each $t$ let $H_t$ denote the corners of the hypercube where $t$ is the largest corner. For example, $H_{60} = \{2,4,6,10,12,20,30,60\}$. The intersection of $H_t$ with $Q$ is the set of values of $q$ which contribute $\pm B_t$ to the sum in proposition \ref{sum reduction thm}. 
The sign $\mu(m)$ depends on the parity of the number of prime factors of $m$, that is, the graph distance from $t$ to $q$. Counting points on a cube with this alternating sign is equivalent to an alternating sum of binomial coefficients, which is nonzero if and only if $|H_t \cap Q| = 1$. This corresponds to $\sum_{j|y}\mu(j) = 0$ if $y \neq 1$ used in proving proposition \ref{sum reduction thm}. In the given example this happens twice: $H_3\cap Q=\{1\}$ and $H_1\cap Q=\{1\}$.  This condition is equivalent to $t$ being coprime with $Q$. As $m$ is square-free, the set of contributors $\{1,3\}$ is the set of square-free common divisors of $\omega,l,k$, as in theorem \ref{thm: old main}.  

\begin{figure}[htb]
    \centering
    \includegraphics[width=0.8\linewidth]{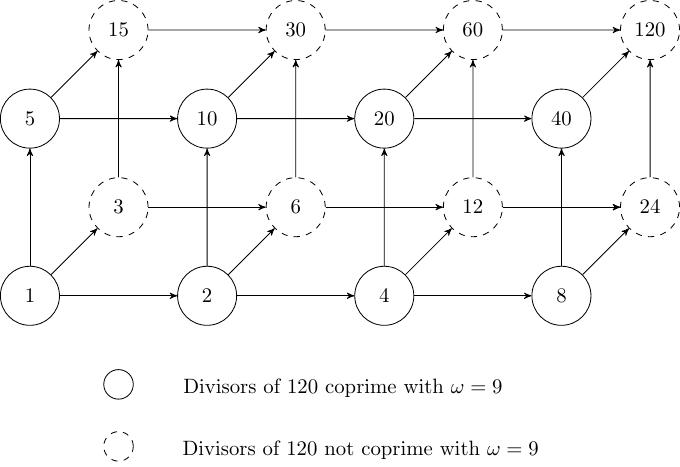}
    \caption{Divisors $t = qm$ of $\gcd(360,240)=120$ with $q$ coprime with the repetition number $\omega=9$ and $m$ a square-free divisor of $\gcd(360/q,240/q)$.  Arrows show two divisors are related by multiplication by a prime.}
    \label{fig:binom grid}
\end{figure}

To demonstrate counting all primitive periodic orbits of a fixed length consider $C_{21}(4,10)$ with $l=15$. The lattice of allowed winding numbers and $b$-counts $S_{21}(4,10)$ is shown in figure \ref{fig:S_21(4,10)}. The figure also illustrates the bounds on the winding number $la/n \leq \omega \leq lb/n$.   
For instance, counting orbits of length $l = 15$, we have $2.86 \leq \omega \leq 7.14$. 
The only valid winding numbers are those divisible by $\gcd(4,10) = 2$, which is reflected in the proof of theorem \ref{thm: new main}; since $k_3 = 1/2$ and $k_5 = 15/2$ and $k_7 = 29/2$, which are not integers, the corresponding binomial coefficients are omitted. Similarly, when $\omega = 4$ or $6$, note that $m = 2$ has $l/m = 15/2$ and $m = 3$ has $k_6/m = 11/3$.  Therefore,
\begin{align}
    |\mathcal{P}_{15}(C_{21}(4,10))| &= \frac{21}{15}\sum_{\omega = 3}^7 \sum_{m|\omega}\mu(m) \binom{15/m}{(21\omega - 60)/(6m)}\\
    &= \frac{21}{15}\left(\binom{15}{4} + \binom{15}{11}\right)\\
    &= 3822 \ .
\end{align}

\begin{figure}[htb]
    \centering
    \includegraphics[width=0.5\linewidth]{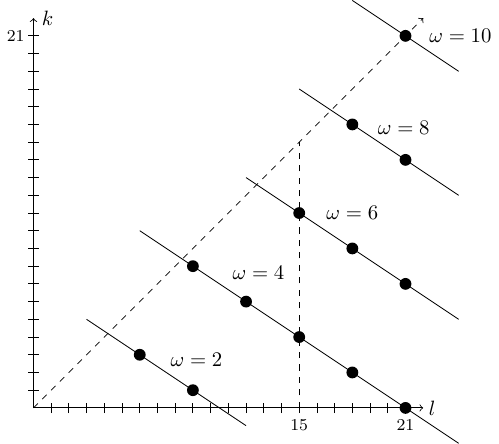}
    \caption{The lattice $S_{21}(4,10)$.}
    \label{fig:S_21(4,10)}
\end{figure}

%


\begin{acknowledgements}
TH would like to thank Lauren Engelthaler and Isaac Hellerman for helpful suggestions.
\end{acknowledgements}

%
%

\bibliographystyle{spmpsci}      
\bibliography{refs}   

%
%

\end{document}